
\documentclass[titlepage,12pt]{article} 
\usepackage{hyperref}
\usepackage[usenames,dvipsnames]{pstricks} 
\usepackage{pst-plot} 
\usepackage[title]{appendix}
\usepackage{amssymb,amsthm,amsmath} 
\usepackage[a4paper]{geometry}
\usepackage{datetime2}
\usepackage[utf8]{inputenc}
\usepackage[italian,english]{babel}

\selectlanguage{english}


\geometry{text={15.7 cm, 22.2 cm},centering,includefoot}



\date{}




\def\NumeroVersione{4}

\date{Release \NumeroVersione\ (\DTMnow)}


%


\newcommand{\ep}{\varepsilon}
\newcommand{\re}{\mathbb{R}}

\newcommand{\n}{\mathbb{N}}

\newcommand{\holder}{H\"older}
\newcommand{\Hold}{\operatorname{Hold}}
\newcommand{\CC}{\mathcal{C}}
\newcommand{\GG}{\mathcal{G}}
\newcommand{\HH}{\mathcal{H}}
\newcommand{\XX}{\mathbb{X}}
\newcommand{\PS}{\mathcal{PS}}


\newtheorem{thm}{Theorem}[section]
\newtheorem{thmbibl}{Theorem}

\newtheorem{rmk}[thm]{Remark}
\newtheorem{prop}[thm]{Proposition}
\newtheorem{defn}[thm]{Definition}

\newtheorem{lemma}[thm]{Lemma}

 
\title{Critical counterexamples for linear wave equations with time-dependent propagation speed}

\author{Marina Ghisi\vspace{1ex}\\ 
{\normalsize Universit\`a degli Studi di Pisa} \\
{\normalsize Dipartimento di Matematica}\\ 
{\normalsize PISA (Italy)}\\
{\normalsize e-mail: \texttt{marina.ghisi@unipi.it}}
\and
Massimo Gobbino\vspace{1ex}\\ 
{\normalsize Universit\`a degli Studi di Pisa} \\
{\normalsize Dipartimento di Ingegneria Civile e Industriale}\\ 
{\normalsize PISA (Italy)}\\  
{\normalsize e-mail: \texttt{massimo.gobbino@unipi.it}}
}


\begin{document}
\maketitle

\begin{abstract}

We investigate an abstract wave equation with a time-dependent propagation speed, and we consider both the non-dissipative case, and the case with a strong damping that depends on a power of the elastic operator. Previous results show that, depending on the values of the parameters and on the time regularity of the propagation speed, this equation exhibits either well-posedness in Sobolev spaces, or well-posedness in Gevrey spaces, or ill-posedness with severe derivative loss.

In this paper we examine some critical cases that were left open by the previous literature, and we show that they fall into the pathological regime. The construction of the counterexamples requires a redesign from scratch of the basic ingredients, and a suitable application of Baire category theorem in place of the usual iteration scheme.

\vspace{6ex}

\noindent{\bf Mathematics Subject Classification 2010 (MSC2010):} 
35L90 (35L20, 35B30, 35B65).

		
\vspace{6ex}

\noindent{\bf Key words:} wave equation, propagation speed, strong damping, derivative loss, \holder\ continuity, Gevrey spaces, ultradistributions, Baire category, residual set.

\end{abstract}

 
\section{Introduction}

Let $\HH$ be a Hilbert space, and let $A$ be a linear nonnegative self-adjoint operator on $\HH$. We consider the evolution equation
\begin{equation}
u''(t)+2\delta A^{\sigma}u'(t)+c(t)Au(t)=0,
\label{eqn:main}
\end{equation}
with initial data
\begin{equation}
u(0)=u_{0},
\qquad
u'(0)=u_{1}.
\label{eqn:main-data}
\end{equation}

Here $\delta\geq 0$ and $\sigma\geq 0$ are real numbers, and $c:[0,+\infty)\to[0,+\infty)$ is a given function that we call ``propagation speed'' in analogy with the wave equation. We always assume that the propagation speed satisfies the strict hyperbolicity condition
\begin{equation}
0<\mu_{1}\leq c(t)\leq\mu_{2}
\qquad
\forall t\geq 0,
\label{hp:c-sh}
\end{equation}
and the \holder\ continuity condition
\begin{equation}
|c(t)-c(s)|\leq H|t-s|^{\alpha}
\qquad
\forall(t,s)\in[0,+\infty)^{2},
\label{hp:c-holder}
\end{equation}
for suitable real constants $\mu_{1}$, $\mu_{2}$, $H$, and $\alpha\in(0,1)$.
Many papers have been devoted to equations of this type (see for example the classical references~\cite{dgcs,cjs} or the more recent ones~\cite{2006-JDE-CicCol,2007-JMAA-CicCol,2008-MScand-CicHir,2002-SNS-ColDSaKin,2003-BSM-ColDSaRei,2013-AdNS-DAbLuc,2011-JDE-DAbRei,2015-JMAA-EbeFitHir,gg:dgcs-strong,gg:cjs-strong,2004-SNS-HirRei,2009-JMAA-HirWir}). Let us briefly discuss the previous results that are more relevant to our presentation.

\paragraph{\textmd{\textit{The non-dissipative case}}}

The case $\delta=0$ was addressed in the seminal paper~\cite{dgcs}. The general philosophy is that higher space regularity of initial data compensates lower regularity of the propagation speed. We refer to section~\ref{sec:previous} for precise definitions and statements, but the situation can be roughly described as follows (see also the figures in section~\ref{sec:statements}).
\begin{itemize}

\item If $c(t)$ is Lipschitz continuous, or more generally has locally bounded variation, then  problem (\ref{eqn:main})--(\ref{eqn:main-data}) is well-posed in $D(A^{1/2})\times H$, or more generally in Sobolev spaces of the form $D(A^{\beta+1/2})\times D(A^{\beta})$.

\item  If $c(t)$ is \holder\ continuous of order $\alpha$, then problem (\ref{eqn:main})--(\ref{eqn:main-data}) is
\begin{itemize}

\item  \emph{globally} well-posed in Gevrey spaces of order $s<(1-\alpha)^{-1}$,

\item  \emph{locally} well-posed in Gevrey spaces of order $s=(1-\alpha)^{-1}$,

\item  ill-posed in Gevrey spaces of order $s>(1-\alpha)^{-1}$. More precisely, there exist a propagation speed $c(t)$ that is \holder\ continuous of order $\alpha$, and a pair of initial conditions $(u_{0},u_{1})$ that are in the Gevrey class of order $s$ for every $s>(1-\alpha)^{-1}$, such that the corresponding solution to (\ref{eqn:main})--(\ref{eqn:main-data}) (which always exists in a very weak sense) is not even a distribution for all positive times. We call (DGCS)-phenomenon this instantaneous severe derivative loss.

\end{itemize}

\end{itemize}

We stress that in the critical case $s=(1-\alpha)^{-1}$ the well-posedness result of~\cite{dgcs} is just local-in-time, meaning that the solution is guaranteed to remain regular only in a finite time interval $[0,t_{0}]$. 

A first result of this paper addresses this critical case. Indeed, we show an example (and actually a residual set of examples) where the solution exhibits the (DGCS)-phenomenon after a finite time interval, thus proving the optimality of the local result in~\cite{dgcs}.

\paragraph{\textmd{\textit{The dissipative case}}}

The case $\delta>0$ was addressed in~\cite{gg:dgcs-strong}. The general philosophy is that there is a competition between the strong damping and the potential low regularity of the propagation speed. If $\sigma\geq 1/2$ the dissipation always wins, even if $c(t)$ is just continuous (independently of the continuity modulus). In this regime the equation behaves as in the case of a constant propagation speed, which means well-posedness in several classes of Sobolev spaces (see~\cite{ggh:tams,gg:dgcs-strong}). So the competition is more interesting when $\sigma<1/2$, where we have three possibilities.

\begin{itemize}

\item  If $c(t)$ is \holder\ continuous of order $\alpha>1-2\sigma$, then the dissipation prevails, and again the problem behaves as in the case of a constant propagation speed, namely it is well-posed in Sobolev spaces such as $D(A^{1/2})\times H$ or $D(A^{\beta+1/2})\times D(A^{\beta})$.

\item  If $c(t)$ is \holder\ continuous of order $\alpha<1-2\sigma$, then the dissipation can be neglected, and the behavior is the same as in the non-dissipative case, meaning
\begin{itemize}

\item  \emph{global} well-posedness in Gevrey spaces of order $s<(1-\alpha)^{-1}$,

\item  \emph{local} well-posedness in Gevrey spaces of order $s=(1-\alpha)^{-1}$,

\item  possibility of (DGCS)-phenomenon in Gevrey spaces of order $s>(1-\alpha)^{-1}$.
\end{itemize}

\item  If $c(t)$ is \holder\ continuous of order $\alpha=1-2\sigma$, and $\delta$ is large enough, then again the damping prevails, and one obtains well-posedness in Sobolev spaces.

\end{itemize}

Two cases were left open.
\begin{enumerate}
\renewcommand{\labelenumi}{(\arabic{enumi})}

\item  The case where $\alpha<1-2\sigma$ and $s=(1-\alpha)^{-1}$. In this paper (see Theorem~\ref{thm:gevrey-critical}) we show that solutions can exhibit the (DGCS)-phenomenon after a finite time, meaning that the local well-posedness result is optimal and can not be improved to global well-posedness. Our examples cover also to the non-dissipative case $\delta=0$.

\item  The case where $\alpha=1-2\sigma$ and $\delta$ is small enough. Also in this case we show (see Theorem~\ref{thm:damping-critical}) that the (DGCS)-phenomenon is possible, exactly as in the non-dissipative case.
 
\end{enumerate}

\paragraph{\textmd{\textit{Overview of the technique}}}

From the technical point of view, the spectral theorem reduces the problem to estimating the growth of solutions to the family of ordinary differential equations
\begin{equation}
u_{\lambda}''(t)+2\delta\lambda^{2\sigma}u_{\lambda}'(t)+\lambda^{2}c(t)u_{\lambda}(t)=0,
\label{eqn:u-lambda}
\end{equation}
with initial data
\begin{equation}
u_{\lambda}(0)=u_{0,\lambda},
\qquad
u_{\lambda}'(0)=u_{1,\lambda}.
\nonumber
\end{equation}

Now we give a brief heuristic presentation of the main ideas behind the previous and present results. For the sake of clarity, at the risk of cheating a little bit from time to time, we do not quote the exact estimates with all technical details, for which the interested reader is referred to the original papers. 

Let us consider the usual energy
\begin{equation}
E_{\lambda}(t):=|u_{\lambda}'(t)|^{2}+\lambda^{2}|u_{\lambda}(t)|^{2}.
\nonumber
\end{equation}

If $c(t)$ is \holder\ continuous of order $\alpha$, the approximated energy estimates introduced in~\cite{dgcs}, and then extended in~\cite{gg:dgcs-strong} to the dissipative case, yield inequalities of the form
\begin{equation}
E_{\lambda}(t)\leq c_{1 }E_{\lambda}(0)\exp\left(c_{2}\lambda^{1-\alpha}t-c_{3}\delta\lambda^{2\sigma}t\right)
\qquad
\forall t\geq 0,
\nonumber
\end{equation}
where $c_{1}$, $c_{2}$, $c_{3}$ are positive constants that depend on $\mu_{1}$, $\mu_{2}$, and on the \holder\ constant of $c(t)$, but are independent of $\delta$ and $\lambda$. Estimates of this kind are the core of all the well-posedness results quoted above. They also explain the competition between $1-\alpha$ and $2\sigma$, and why the size of $\delta$ becomes relevant if and only if $1-\alpha=2\sigma$.

On the contrary, the (DGCS)-phenomenon originates from estimates on the opposite side. More precisely, it was shown (in~\cite{dgcs} in the case $\delta=0$, and in~\cite{gg:dgcs-strong} when $\delta>0$) that for every $\lambda$ there exists a propagation speed $c_{\lambda}(t)$, \holder\ continuous of order $\alpha$ with a constant that does not depend on $\lambda$, such that equation (\ref{eqn:u-lambda}) with $c(t):=c_{\lambda}(t)$ admits a nontrivial solution that satisfies
\begin{equation}
E_{\lambda}(t)\geq c_{4}E_{\lambda}(0)\exp\left(c_{5}\lambda^{1-\alpha}t-c_{6}\delta\lambda^{2\sigma}t\right)
\qquad
\forall t\geq 0,
\nonumber
\end{equation}
where again the constants $c_{4}$, $c_{5}$, $c_{6}$ are positive and do not depend on $\delta$ and $\lambda$. In this case we say that $c_{\lambda}(t)$ ``activates'' the frequency $\lambda$. Roughly speaking, this is possible because of a resonance effect between the oscillations of $c_{\lambda}(t)$, and the ``natural'' oscillations of solutions to the same equation with constant propagation speed. The big problem is that in this construction $c_{\lambda}(t)$ \emph{does depend} on $\lambda$.

In order to overcome this difficulty, a very clever iterative procedure was devised in~\cite{dgcs}, and then exploited so far in the literature. In a nutshell, one chooses a sequence of frequencies $\{\lambda_{k}\}$ that grows fast enough, and a decreasing sequence $\{t_{k}\}$ of positive times that goes to 0 fast enough. Then one defines a propagation speed $c(t)$ that coincides in $[t_{k+1},t_{k}]$ with the propagation speed $c_{\lambda_{k}}(t)$ that activates the frequency $\lambda_{k}$. If all the parameter are chosen in a clever way, the resulting propagation speed is \holder\ continuous of order $\alpha$, and for every positive integer $k$ equation (\ref{eqn:u-lambda}) with $\lambda:=\lambda_{k}$ admits a nontrivial solution such that
\begin{equation}
E_{\lambda_{k}}(t)\geq c_{7}E_{\lambda_{k}}(0)\exp\left(c_{8}\frac{\lambda_{k}^{1-\alpha}}{\log(1+\lambda_{k})}t-c_{9}\delta\lambda_{k}^{2\sigma}t\right)
\qquad
\forall t\geq 0,
\nonumber
\end{equation}
where again the constants do not depend on $\delta$ and $\lambda_{k}$.

In other words, now $c(t)$ does not depend on the frequency $\lambda_{k}$, but we had to pay a little in the growth rate (actually the $\log(1+\lambda_{k})$ can be replaced by any given unbounded function). The payment comes from the fact that $c(t)$ has to activate infinitely many frequencies, and it activates them \emph{one-by-one} in time intervals of (necessarily) \emph{vanishing lengths}.

This construction opens the door to all the instances of the (DGCS)-phenomenon that we mentioned above. On the other hand, it is clear that it can \emph{not} help for critical values of the parameters. In order to address these cases, we need a propagation speed $c(t)$, independent of $\lambda$, such that equation (\ref{eqn:u-lambda}) with $\lambda:=\lambda_{k}$ admits a nontrivial solution such that
\begin{equation}
E_{\lambda_{k}}(t)\geq c_{10}E_{\lambda_{k}}(0)\exp\left(c_{11}\lambda_{k}^{1-\alpha}t-c_{12}\delta\lambda_{k}^{2\sigma}t\right)
\qquad
\forall t\geq 0.
\nonumber
\end{equation}

This is the main technical contribution of this paper, namely a propagation speed $c(t)$ that \emph{activates an unbounded set of frequencies in the same time}. 

Our construction has two main steps.
\begin{itemize}

\item  In the first step (see section~\ref{sec:basic}) we show that any propagation speed $c_{0}(t)$ that is smooth enough can be modified in order to obtain a propagation speed $c_{\lambda}(t)$ that activates a large enough frequency $\lambda$. We can also assume that $c_{\lambda}-c_{0}$ is as small as we want in the uniform norm, and that the \holder\ constant of $c_{\lambda}$ is as close as we want to the \holder\ constant of $c_{0}$. Note that the \holder\ constant of $c_{\lambda}-c_{0}$ is not necessarily small, and actually it is of the same order as the \holder\ constant of $c_{0}$. In other words, in this example the \holder\ constant of the sum of two functions is the maximum, and not the sum, of the \holder\ constants of the two terms. This sounds somewhat counterintuitive, and it is possible because the two terms ``oscillate at different frequencies'' (see Lemma~\ref{lemma:holder}).

\item  In the second step (see section~\ref{sec:baire}) we apply Baire category theorem in order to show that the set of propagation speeds that activate countably many frequences in the same time is residual in the set of all admissible propagation speeds. In this way we avoid the technicalities of the iteration scheme, and we leave all the dirty work to the abstract result.

\end{itemize}

The conclusion is not the construction of a single counterexample, resulting from a sum of lucky circumstances and clever choices, but a proof that \emph{the (DGCS)-phenomenon is the typical behavior} when the assumptions of the classical well-posedness results are not satisfied. In~\cite{gg:residual} we observed the same issue in the non-critical cases, and in different examples from geometric measure theory and transport equations.	

In this paper we focussed on \holder\ continuous propagation speeds, but we are confident that these techniques could be useful in the construction of counterexamples also when the propagation speed satisfies different regularity conditions (see for example \cite{2006-JDE-CicCol,2007-JMAA-CicCol,2003-BSM-ColDSaRei,2004-SNS-HirRei}).

\paragraph{\textmd{\textit{Structure of the paper}}}

In section~\ref{sec:previous} we introduce the functional setting, and we review the previous results that are relevant to this paper. In section~\ref{sec:statements} we state our two main contributions, and we show how they complete the picture of regularity results for solutions to (\ref{eqn:main}). In section~\ref{sec:proofs} we show the existence of a residual set of ``universal activators'', namely admissible propagation speeds that activate countably many frequencies, and we use these propagation speeds in order to prove our main results. Finally, in the appendix we present a heuristic motivation of the (otherwise somewhat mysterious) construction that we made.


\setcounter{equation}{0}
\section{Notations and previous work}\label{sec:previous}

\paragraph{\textmd{\textit{Functional spaces}}}

Let $\HH$ be a Hilbert space, and let $A$ be a linear nonnegative self-adjoint operator on $\HH$. Just for simplicity, we always make the following assumption.

\begin{defn}[Nonnegative discrete multiplication operator]
\begin{em}

Let $A$ be a linear continuous operator on a Hilbert space $\HH$. We say that $A$ is a \emph{nonnegative discrete multiplication operator} if there exist an orthonormal basis $\{e_{i}\}$ of $\HH$, and a nondecreasing sequence $\{\lambda_{i}\}$ of nonnegative real numbers such that 
\begin{equation}
Ae_{i}=\lambda_{i}^{2}e_{i}
\qquad
\forall i\in\n.
\nonumber
\end{equation}

In addition, we say that the operator is \emph{unbounded} if $\lambda_{i}\to+\infty$ as $i\to +\infty$.
\end{em}
\end{defn}

As explained in~\cite{gg:dgcs-strong,gg:cjs-strong}, there is almost no loss of generality in this assumption, because the spectral theorem in its general form states that every self-adjoint continuous operator in a Hilbert space behaves as a multiplication operator in some $L^{2}$ space with respect to some (not necessarily discrete) metric space.

Thanks to the orthonormal basis, we can identify every element $u\in\HH$ with the sequence $\{u_{i}\}\in\ell^{2}$ of its ``Fourier'' components. This identification can be extended in order to define Sobolev spaces, Gevrey spaces and (hyper)distributions. Several choices are possible (see~\cite{gg:dgcs-strong,gg:cjs-strong}). Here we recall the definitions that are needed in the sequel.

\begin{defn}
\begin{em}

Let $u$ be a sequence $\{u_{i}\}$ of real numbers.
\begin{itemize}

\item  \emph{Sobolev spaces and distributions}. Let $\beta$ be a real number. We say that $u\in D(A^{\beta})$ if
\begin{equation}
\|u\|_{D(A^{\beta})}^{2}:=\sum_{i=0}^{\infty}u_{i}^{2}(1+\lambda_{i})^{4\beta}<+\infty.
\nonumber
\end{equation}

\item  \emph{Gevrey spaces}. Let $\beta$ be a real number, and let $s$ and $r$ be positive real numbers. We say that $u\in\GG_{s,r,\beta}(A)$ if
\begin{equation}
\|u\|_{\GG_{s,r,\beta}(A)}^{2}:=\sum_{i=0}^{\infty}u_{i}^{2}(1+\lambda_{i})^{4\beta}\exp\left(2r\lambda_{i}^{1/s}\right)<+\infty.
\nonumber
\end{equation}

\item  \emph{Gevrey ultradistributions}. Let $\beta$ be a real number, and let $S$ and $R$ be positive real numbers. We say that $u\in\GG_{-S,R,\beta}(A)$ if
\begin{equation}
\|u\|_{\GG_{-S,R,\beta}(A)}^{2}:=\sum_{i=0}^{\infty}u_{i}^{2}(1+\lambda_{i})^{4\beta}\exp\left(-2R\lambda_{i}^{1/S}\right)<+\infty.
\nonumber
\end{equation}

\end{itemize}

\end{em}
\end{defn}

We refer to~\cite[Remark~2.2 and Remark~2.3]{gg:dgcs-strong} for further comments on these spaces, and for consistency with the classical setting. In the case of Gevrey spaces and ultradistributions, we refer to $s$ and $S$ as ``the order'', and we refer to $r$ and $R$ as ``the radius''. The parameter $\beta$ represents some sort of further Sobolev regularity.

\paragraph{\textmd{\textit{Admissible propagation speeds}}}

In this paper we restrict to the following class of admissible propagation speeds. 

\begin{defn}[Admissible propagation speeds]
\begin{em}

Let $\mu_{1}$, $\mu_{2}$, $\alpha$, $H$ be real numbers such that
\begin{equation}
0<\mu_{1}<\mu_{2},
\qquad\quad
\alpha\in(0,1),
\qquad\quad
H>0.
\label{hp:accH}
\end{equation}

The set of admissible propagations speeds is the set $\PS(\mu_{1},\mu_{2},\alpha,H)$ of all functions $c:[0,+\infty)\to[0,+\infty)$ satisfying the strict hyperbolicity condition (\ref{hp:c-sh}) and the \holder\ continuity condition (\ref{hp:c-holder}).

\end{em}
\end{defn}

We observe that $\PS(\mu_{1},\mu_{2},\alpha,H)$ is a complete metric space with respect to the distance induced by the uniform norm.

\paragraph{\textmd{\textit{Very weak solutions}}}

When $A$ is a nonnegative multiplication operator, one can reduce problem  (\ref{eqn:main})--(\ref{eqn:main-data}) to the following (uncoupled) infinite system of ordinary differential equations
\begin{equation}
u_{i}''(t)+2\delta\lambda_{i}^{2\sigma}u_{i}'(t)+c(t)\lambda_{i}^{2}u_{i}(t)=0
\qquad
\forall i\in\n,\quad\forall t\geq 0,
\nonumber
\end{equation}
with initial data
\begin{equation}
u_{i}(0)=u_{0,i},
\qquad
u_{i}'(0)=u_{1,i}.
\nonumber
\end{equation}

A \emph{very weak solution} to (\ref{eqn:main})--(\ref{eqn:main-data}) is a sequence $\{u_{i}(t)\}$ of solutions to this system, and for trivial reasons it exists and is unique for every choice of the sequences of initial data $\{u_{0,i}\}$ and $\{u_{1,i}\}$. The main point is understanding the regularity of the sequence $\{u_{i}(t)\}$ in terms of the regularity of initial data.

\paragraph{\textmd{\textit{Previous results}}}

The first result concerns ``regularity'' in a huge space of hyperdistributions, both for the dissipative and for the non-dissipative equation. From the point of view of counterexamples, this represents some sort of ``bound from below'', namely a minimal regularity that cannot be lost during the evolution.

\begin{thmbibl}[Well-posedness in Gevrey hyperdistributions, see~{\cite[Theorem~3]{dgcs}}]\label{thmbibl:hyper}

Let us consider problem (\ref{eqn:main})--(\ref{eqn:main-data}) under the following assumptions:
\begin{itemize}

\item $A$ is a nonnegative discrete multiplication operator on a Hilbert space $\HH$,

\item  $c\in\PS(\mu_{1},\mu_{2},\alpha,H)$ for suitable values of the parameters satisfying (\ref{hp:accH}),

\item  $\delta\geq 0$ and $\sigma\geq 0$ are real numbers,

\item  there exist real numbers $R_{0}>0$ and $0<S\leq(1-\alpha)^{-1}$ such that
\begin{equation}
(u_{0},u_{1})\in\GG_{-S,R_{0},1/2}(A)\times\GG_{-S,R_{0},0}(A).
\nonumber
\end{equation}

\end{itemize}

Then there exists $R>0$ such that the unique solution $u$ satisfies
\begin{equation}
u\in C^{0}\left([0,+\infty),\GG_{-S,R_{0}+Rt,1/2}(A)\right)\cap C^{1}\left([0,+\infty),\GG_{-S,R_{0}+Rt,0}(A)\strut\right).
\label{th:scale-hyper}
\end{equation}

\end{thmbibl}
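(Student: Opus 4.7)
The plan is to reduce the problem to the family of decoupled scalar ODEs associated with the spectral decomposition and then show that the exponential amplification of each frequency is absorbed by the exponential decay built into the hyperdistribution norm, thanks to the condition $S\leq(1-\alpha)^{-1}$.

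First I would write $u_0=\sum u_{0,i}e_i$ and $u_1=\sum u_{1,i}e_i$ and associate to every $i\in\n$ the scalar problem
\begin{equation}
u_i''(t)+2\delta\lambda_i^{2\sigma}u_i'(t)+c(t)\lambda_i^2 u_i(t)=0,
\qquad
u_i(0)=u_{0,i},\quad u_i'(0)=u_{1,i},
\nonumber
\end{equation}
so that the very weak solution is just $u(t)=\sum u_i(t)e_i$. The hyperdistribution regularity stated in (\ref{th:scale-hyper}) is then equivalent to uniform-in-$i$ estimates of the form
\begin{equation}
\bigl(\lambda_i^2|u_i(t)|^2+|u_i'(t)|^2\bigr)(1+\lambda_i)^{4\beta}\exp\!\bigl(-2(R_0+Rt)\lambda_i^{1/S}\bigr)
\leq C_i,
\nonumber
\end{equation}
with $\{C_i\}$ summable, for $\beta=1/2$ and $\beta=0$.

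Next I would invoke the approximated energy estimates recalled in the introduction (proved in~\cite{dgcs} for $\delta=0$ and extended in~\cite{gg:dgcs-strong} to $\delta>0$): for every $c\in\PS(\mu_1,\mu_2,\alpha,H)$ there are constants $c_1,c_2>0$, independent of $i$ and of $\delta$, such that the energy $E_{\lambda_i}(t):=|u_i'(t)|^2+\lambda_i^2|u_i(t)|^2$ satisfies
\begin{equation}
E_{\lambda_i}(t)\leq c_1 E_{\lambda_i}(0)\exp\!\bigl(c_2\lambda_i^{1-\alpha}t\bigr)
\qquad
\forall t\geq 0,
\nonumber
\end{equation}
(the dissipative term only improves this bound, so it can be discarded for hyperdistribution regularity). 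This is the only analytical input: once we have it, the problem becomes a bookkeeping exercise on exponentials.

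The key combinatorial observation is that the hypothesis $S\leq(1-\alpha)^{-1}$ gives $1/S\geq 1-\alpha$, so $\lambda_i^{1-\alpha}\leq 1+\lambda_i^{1/S}$ for every $i$. Choosing $R:=c_2/2+1$, we obtain
\begin{equation}
c_2\lambda_i^{1-\alpha}t-2Rt\lambda_i^{1/S}\leq c_2 t,
\nonumber
\end{equation}
which, combined with the energy bound, yields
\begin{equation}
E_{\lambda_i}(t)\exp\!\bigl(-2(R_0+Rt)\lambda_i^{1/S}\bigr)
\leq c_1 e^{c_2 t}E_{\lambda_i}(0)\exp\!\bigl(-2R_0\lambda_i^{1/S}\bigr).
\nonumber
\end{equation}
Multiplying by $(1+\lambda_i)^{4\beta}$ with $\beta\in\{0,1/2\}$ and summing over $i$ gives the desired membership in $\GG_{-S,R_0+Rt,\beta}(A)$, because the right-hand side is controlled by the initial-data norms in $\GG_{-S,R_0,1/2}(A)\times\GG_{-S,R_0,0}(A)$.

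Finally, continuity in time in the scale (\ref{th:scale-hyper}) follows from the continuous dependence of each $u_i(t)$ and $u_i'(t)$ on $t$ (they are classical solutions of linear ODEs with continuous coefficients), together with the uniform-in-$i$ summability bounds above, which provide the dominated-convergence argument needed to pass from pointwise to norm convergence. I do not expect a genuine obstacle here: the whole proof is structural, and the only ``hard'' ingredient is the quoted approximated energy estimate, which is taken as a black box from the cited literature.
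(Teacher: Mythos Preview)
The paper does not contain a proof of this statement: Theorem~\ref{thmbibl:hyper} is quoted as a previous result from~\cite[Theorem~3]{dgcs} (with the dissipative extension in~\cite{gg:dgcs-strong}) and is used only as background. So there is no ``paper's own proof'' to compare against.

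That said, your sketch is essentially the standard argument and is correct. The reduction to components, the use of the approximated energy estimate $E_{\lambda_i}(t)\leq c_1 E_{\lambda_i}(0)\exp(c_2\lambda_i^{1-\alpha}t)$, and the absorption of the growth into the weight via $1/S\geq 1-\alpha$ are exactly the mechanism behind the result in~\cite{dgcs}. Your elementary inequality $\lambda_i^{1-\alpha}\leq 1+\lambda_i^{1/S}$ and the choice $R=c_2/2+1$ do the job cleanly; the only ``black box'' is the energy estimate itself, and that is precisely what the cited references supply. The continuity argument at the end is also the right one (componentwise continuity plus dominated convergence in the weighted $\ell^2$ norm). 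In short: nothing to fix, but be aware that you are reproving a cited theorem rather than something the present paper proves.
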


Condition (\ref{th:scale-hyper}), with the range space increasing with time, simply means that 
$$u\in C^{0}\left([0,\tau],\GG_{-S,R_{0}+R\tau,1/2}(A)\right)\cap C^{1}\left([0,\tau],\GG_{-S,R_{0}+R\tau,0}(A)\strut\right)
\quad\quad
\forall\tau> 0.$$
		
The second result concerns well-posedness in Gevrey spaces of suitable order, both for the dissipative and for the non-dissipative equation.

\begin{thmbibl}[Well-posedness in Gevrey spaces, see~{\cite[Theorem~2]{dgcs}}]\label{thmbibl:gevrey}

Let us consider problem (\ref{eqn:main})--(\ref{eqn:main-data}) under the following assumptions:
\begin{itemize}

\item $A$ is a nonnegative discrete multiplication operator on a Hilbert space $\HH$,

\item  $c\in\PS(\mu_{1},\mu_{2},\alpha,H)$ for suitable values of the parameters satisfying (\ref{hp:accH}),

\item  $\delta\geq 0$ and $\sigma\geq 0$ are real numbers,

\item  there exist real numbers $r_{0}>0$ and $0<s\leq(1-\alpha)^{-1}$ such that
\begin{equation}
(u_{0},u_{1})\in\GG_{s,r_{0},1/2}(A)\times\GG_{s,r_{0},0}(A).
\nonumber
\end{equation}

\end{itemize}

Then the following statements hold true.
\begin{enumerate}
\renewcommand{\labelenumi}{(\arabic{enumi})}

\item  \emph{(Global-in-time regularity)} In the case $0<s<(1-\alpha)^{-1}$, the unique solution $u$ to the problem satisfies 
\begin{equation}
u\in C^{0}\left([0,T],\GG_{s,r_{1},1/2}(A)\right)\cap C^{1}\left([0,T],\GG_{s,r_{1},0}(A)\strut\right)
\qquad
\forall T>0,
\quad
\forall r_{1}<r_{0}.
\nonumber
\end{equation}

\item  \emph{(Local-in-time regularity)} In the case $s=(1-\alpha)^{-1}$, there exist real numbers $t_{0}>0$ and $r>0$, with $r$ independent of $r_{0}$, such that $rt_{0}<r_{0}$ and the unique solution $u$ to the problem satisfies
\begin{equation}
u\in C^{0}\left([0,t_{0}],\GG_{s,r_{0}-rt,1/2}(A)\right)\cap C^{1}\left([0,t_{0}],\GG_{s,r_{0}-rt,0}(A)\strut\right).
\label{th:scale-gevrey}
\end{equation}

\end{enumerate}

\end{thmbibl}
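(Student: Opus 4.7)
Since $A$ is a nonnegative discrete multiplication operator, the spectral theorem decouples problem (\ref{eqn:main})--(\ref{eqn:main-data}) into the scalar ODEs (\ref{eqn:u-lambda}), one for each eigenvalue $\lambda_{i}^{2}$. The Gevrey norms of $(u(t),u'(t))$ are weighted $\ell^{2}$ sums of the individual energies $E_{\lambda_{i}}(t):=|u_{\lambda_{i}}'(t)|^{2}+\lambda_{i}^{2}|u_{\lambda_{i}}(t)|^{2}$, with weight $(1+\lambda_{i})^{4\beta}\exp(2r\lambda_{i}^{1/s})$. The theorem therefore reduces to establishing, uniformly in $\lambda$, an approximated energy estimate of the form
\[
E_{\lambda}(t)\leq c_{1}E_{\lambda}(0)\exp\!\left(c_{2}\lambda^{1-\alpha}t-c_{3}\delta\lambda^{2\sigma}t\right),
\]
and then balancing the growth rate $\lambda^{1-\alpha}$ against the Gevrey weight $\lambda^{1/s}$.

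\textbf{Approximated energy estimate.} For each large $\lambda$, choose the mollification scale $\ep:=1/\lambda$ and set $c_{\ep}:=c*\rho_{\ep}$ (with $c$ suitably reflected for $t\leq 0$). Assumption (\ref{hp:c-holder}) gives $\|c-c_{\ep}\|_{\infty}\lesssim\ep^{\alpha}$ and $\|c_{\ep}'\|_{\infty}\lesssim\ep^{\alpha-1}$. Introduce the modified energy $F_{\lambda}(t):=|u_{\lambda}'(t)|^{2}+c_{\ep}(t)\lambda^{2}|u_{\lambda}(t)|^{2}$, which is equivalent to $E_{\lambda}$ by (\ref{hp:c-sh}). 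Differentiating and using (\ref{eqn:u-lambda}) we obtain
\[
F_{\lambda}'(t)=2\lambda^{2}(c_{\ep}-c)\,u_{\lambda}u_{\lambda}'+\lambda^{2}c_{\ep}'\,u_{\lambda}^{2}-4\delta\lambda^{2\sigma}|u_{\lambda}'|^{2}.
\]
Both error terms are bounded by a constant times $\lambda^{1-\alpha}F_{\lambda}$ precisely because the choice $\ep=1/\lambda$ balances $\ep^{\alpha}\lambda$ against $\ep^{\alpha-1}$. The dissipative term is non-positive and contributes the $-c_{3}\delta\lambda^{2\sigma}$ factor. Gronwall's inequality then yields the stated estimate.

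\textbf{From ODE estimates to Gevrey regularity.} Inserting the energy estimate into the Gevrey norm at radius $r_{1}$ gives
\[
\|(u(t),u'(t))\|^{2}_{\GG_{s,r_{1},1/2}\times\GG_{s,r_{1},0}}\leq c_{1}\sum_{i}E_{\lambda_{i}}(0)(1+\lambda_{i})^{2}\exp\!\left(c_{2}\lambda_{i}^{1-\alpha}t+2r_{1}\lambda_{i}^{1/s}\right).
\]
If $s<(1-\alpha)^{-1}$, then $1/s>1-\alpha$, so for any $T>0$ and $r_{1}<r_{0}$ the exponent $c_{2}\lambda^{1-\alpha}T+2r_{1}\lambda^{1/s}$ is dominated by $2r_{0}\lambda^{1/s}$ for $\lambda$ large, and the series is controlled by the initial Gevrey norm; continuity in $t$ with values in the fixed Gevrey space then follows from a routine density/continuity argument. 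If $s=(1-\alpha)^{-1}$, then $1/s=1-\alpha$ and the exponent collapses to $(c_{2}t+2r_{1})\lambda^{1-\alpha}$; setting $r:=c_{2}/2$ and $r_{1}:=r_{0}-rt$, the right-hand side is dominated by $\|(u_{0},u_{1})\|^{2}_{\GG_{s,r_{0},1/2}\times\GG_{s,r_{0},0}}$ provided $t\in[0,t_{0}]$ with $rt_{0}<r_{0}$, giving exactly (\ref{th:scale-gevrey}).

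\textbf{Main obstacle.} The delicate point is the calibration $\ep=1/\lambda$: a larger $\ep$ shrinks $|c_{\ep}'|$ but inflates $|c-c_{\ep}|$, and conversely. That the optimal balance produces exactly the rate $\lambda^{1-\alpha}$ is what singles out the critical Gevrey order $s=(1-\alpha)^{-1}$, and this is also what limits the well-posedness result to a local-in-time statement at the threshold (the radius $r_{1}$ must shrink linearly as $t$ grows). Everything else---summation over frequencies, continuity in time, identification of the abstract series with the unique very weak solution---is standard.
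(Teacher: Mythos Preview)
The paper does not actually prove Theorem~\ref{thmbibl:gevrey}: it is stated in the ``Previous results'' part of section~\ref{sec:previous} as a citation of~\cite[Theorem~2]{dgcs}, and no proof is given in the paper itself. What the paper does contain is a brief heuristic in the introduction, recalling that the approximated energy estimates of~\cite{dgcs} (extended in~\cite{gg:dgcs-strong} to $\delta>0$) yield $E_{\lambda}(t)\leq c_{1}E_{\lambda}(0)\exp(c_{2}\lambda^{1-\alpha}t-c_{3}\delta\lambda^{2\sigma}t)$, from which the Gevrey well-posedness follows.

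Your sketch is precisely this approximated energy argument: mollify $c$ at scale $\ep=1/\lambda$, differentiate the Kovalevskian energy $F_{\lambda}=|u_{\lambda}'|^{2}+c_{\ep}\lambda^{2}|u_{\lambda}|^{2}$, balance $\lambda^{2}\|c-c_{\ep}\|_{\infty}\sim\lambda^{2-\alpha}\cdot\lambda^{-1}$ against $\lambda\|c_{\ep}'\|_{\infty}\sim\lambda\cdot\lambda^{1-\alpha}$ (both of order $\lambda^{1-\alpha}$ after dividing by the extra $\lambda$ hidden in $F_{\lambda}$), apply Gronwall, and then feed the resulting exponential bound into the Gevrey series. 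This is exactly the method of~\cite{dgcs}, and your identification of $r=c_{2}/2$ in the critical case $s=(1-\alpha)^{-1}$ is the right way to read off statement~(2). So your proposal is correct and coincides with the original proof that the paper is citing; there is simply no ``paper's own proof'' to compare against beyond the one-line heuristic in the introduction, which matches your outline.
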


We observe that also in Theorem~\ref{thmbibl:gevrey} above the range space in (\ref{th:scale-gevrey}) is increasing with time. More important, in the critical case $s=(1-\alpha)^{-1}$, this is a local well-posedness result because after a finite time the solution might have lost all its initial radius $r_{0}$ (and this actually happens, as we are going to show in this paper). 

In the next result the strong damping comes into play for the first time, providing well-posedness in Sobolev spaces if the propagation speed in ``enough \holder\ continuous''.

\begin{thmbibl}[Well-posedness in Sobolev spaces, see~{\cite[Theorem~3.2]{gg:dgcs-strong}}]\label{thmbibl:sobolev}

Let us consider problem (\ref{eqn:main})--(\ref{eqn:main-data}) under the following assumptions:
\begin{itemize}

\item $A$ is a nonnegative discrete multiplication operator on a Hilbert space $\HH$,

\item  $c\in\PS(\mu_{1},\mu_{2},\alpha,H)$ for suitable values of the parameters satisfying (\ref{hp:accH}),

\item  $\delta$ and $\sigma$ are real numbers such that either $2\sigma>1-\alpha$ and $\delta>0$, or $2\sigma=1-\alpha$ and $\delta$ is large enough.

\item  $(u_{0},u_{1})\in D(A^{1/2})\times H$.

\end{itemize}

Then the unique solution $u$ to the problem satisfies
\begin{equation}
u\in C^{0}\left([0,+\infty),D(A^{1/2})\right)\cap C^{1}\left([0,+\infty),H\strut\right).
\nonumber
\end{equation}

\end{thmbibl}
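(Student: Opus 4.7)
The plan is to decouple problem (\ref{eqn:main})--(\ref{eqn:main-data}) via the spectral basis of $A$, reducing it to the infinite family of scalar ODEs (\ref{eqn:u-lambda}), and then to establish an estimate on the natural energy $E_\lambda(t):=|u_\lambda'(t)|^2+\lambda^2|u_\lambda(t)|^2$ of the form
\begin{equation*}
E_\lambda(t)\leq c_1 E_\lambda(0)\exp\bigl(c_2\lambda^{1-\alpha}t-c_3\delta\lambda^{2\sigma}t\bigr),
\end{equation*}
with constants depending only on $\mu_{1}$, $\mu_{2}$, $H$, but not on $\lambda$. Such a bound is enough to conclude: in the regime $2\sigma>1-\alpha$ with $\delta>0$, the dissipative term dominates for $\lambda\geq\lambda_0$ and yields a uniform $E_\lambda(t)\leq K E_\lambda(0)$, while for the finitely many ``low'' frequencies $\lambda\leq\lambda_0$ one uses Gronwall on a finite time interval; in the critical case $2\sigma=1-\alpha$ the exponent becomes $(c_2-c_3\delta)\lambda^{1-\alpha}t$, which is non-positive as soon as $\delta\geq c_2/c_3$, and this is precisely the meaning of ``$\delta$ large enough''. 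Summing the component-wise bound in $\ell^{2}$ against the weights corresponding to $D(A^{1/2})$ and $\HH$ then delivers the claimed $C^0([0,+\infty),D(A^{1/2}))\cap C^1([0,+\infty),\HH)$ regularity.

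The heart of the argument is producing the energy inequality. I would work with an approximated energy in which the propagation speed is replaced by its mollification $c_\varepsilon(t):=(c\ast\rho_\varepsilon)(t)$ at a scale $\varepsilon=\varepsilon(\lambda)$ to be chosen, namely
\begin{equation*}
F_\lambda(t):=|u_\lambda'(t)|^2+\lambda^2 c_\varepsilon(t)|u_\lambda(t)|^2,
\end{equation*}
augmented by a small cross term like $2\delta\lambda^{2\sigma}u_\lambda(t)u_\lambda'(t)$ (together with a matching $\delta^2\lambda^{4\sigma}|u_\lambda|^2$ that preserves positivity) in order to diagonalize the contribution of the damping. Differentiating along solutions of (\ref{eqn:u-lambda}) produces three bad terms: the substitution error $|c(t)-c_\varepsilon(t)|\leq H\varepsilon^\alpha$ contributes an order $H\varepsilon^\alpha\lambda^2|u_\lambda||u_\lambda'|$; the time-derivative term $|c_\varepsilon'(t)|\leq CH\varepsilon^{\alpha-1}$ contributes $CH\varepsilon^{\alpha-1}\lambda^2|u_\lambda|^2$; and a residual cross term comes from the damping correction. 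Choosing $\varepsilon=1/\lambda$ balances the first two contributions into $CH\lambda^{1-\alpha}F_\lambda$, while the strong damping supplies a favorable $-c_3\delta\lambda^{2\sigma}F_\lambda$. Gronwall's lemma, combined with the equivalence $F_\lambda\sim E_\lambda$ guaranteed by (\ref{hp:c-sh}) once the auxiliary quadratic term is properly tuned, then yields the exponential estimate above.

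The main obstacle lies in the critical case $2\sigma=1-\alpha$, where the sign of the overall exponent depends delicately on the numerical constants produced by the Cauchy--Schwarz bounds on the cross terms, on the H\"older constant $H$, and on the hyperbolicity constants $\mu_{1},\mu_{2}$. A sharp threshold on $\delta$ therefore emerges only after careful bookkeeping of those constants; the standard device is to replace $F_\lambda$ by an energy closer to $|u_\lambda'+\delta\lambda^{2\sigma}u_\lambda|^2+\lambda^2 c_\varepsilon(t)|u_\lambda|^2$, so that the dominant damping interaction is absorbed as a perfect square and only quantitatively controlled correction terms remain. Once this quantitative tuning is in place, promoting the pointwise bound to the claimed regularity of $u$ is standard: a density argument on smooth initial data combined with dominated convergence in $\ell^{2}$ gives continuity in time with values in $D(A^{1/2})$ and $\HH$ respectively.
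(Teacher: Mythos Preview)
Your sketch is correct and follows the approximated-energy method of~\cite{dgcs}, extended to the damped setting in~\cite{gg:dgcs-strong}; this is exactly the approach behind the cited result. Note, however, that the present paper does not contain a proof of this statement: Theorem~\ref{thmbibl:sobolev} is quoted as a previous result from~\cite[Theorem~3.2]{gg:dgcs-strong}, and only the heuristic inequality $E_{\lambda}(t)\leq c_{1}E_{\lambda}(0)\exp(c_{2}\lambda^{1-\alpha}t-c_{3}\delta\lambda^{2\sigma}t)$ is recalled in the introduction, so there is no in-paper proof to compare against beyond that outline, which your proposal matches.
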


Finally, the last result is the counterpart of Theorem~\ref{thmbibl:sobolev}. It shows that the (DGCS)-phenomenon can happen, despite the strong damping, if the propagation speed is not ``enough \holder\ continuous''. We point out that the derivative loss is as severe as allowed by Theorem~\ref{thmbibl:hyper}.

\begin{thmbibl}[Severe derivative loss, see~\cite{gg:dgcs-strong}]\label{thmbibl:dgcs}

Let $\HH$ be a Hilbert space, and let $A$ be a nonnegative discrete multiplication operator that we assume to be unbounded. Let $\mu_{1}$, $\mu_{2}$, $\alpha$, $H$ be real numbers satisfying (\ref{hp:accH}). Let $\delta$ and $\sigma$ be real numbers such that
\begin{equation}
\delta\geq 0
\qquad\mbox\qquad
2\sigma<1-\alpha.
\nonumber
\end{equation}

Then there exist a propagation speed $c\in\PS(\mu_{1},\mu_{2},\alpha,H)$, and a very weak solution $u$ to equation (\ref{eqn:main}), such that
\begin{equation}
(u(0),u'(0))\in\GG_{s,r,1/2}(A)\times\GG_{s,r,0}(A)
\qquad
\forall s>\frac{1}{1-\alpha},
\quad
\forall r>0,
\nonumber
\end{equation}
but
\begin{equation}
(u(t),u'(t))\not\in\GG_{-S,R,1/2}(A)\times\GG_{-S,R,0}(A)
\qquad
\forall S>\frac{1}{1-\alpha},
\quad
\forall R>0,
\quad
\forall t>0.
\nonumber
\end{equation}

\end{thmbibl}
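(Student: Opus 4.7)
Via the spectral decomposition of $A$, equation (\ref{eqn:main}) uncouples into the family (\ref{eqn:u-lambda}) indexed by the eigenvalues $\lambda_i$ of $A^{1/2}$, and a very weak solution is just any sequence $\{u_i(t)\}$ of solutions of those scalar ODEs. My first step would be to prove the single-frequency statement: for every sufficiently large $\lambda$ there exists $c_{\lambda}\in\PS(\mu_{1},\mu_{2},\alpha,H)$ such that (\ref{eqn:u-lambda}) with $c:=c_{\lambda}$ admits a nontrivial solution whose energy obeys
\[
E_{\lambda}(t)\geq c_{4}E_{\lambda}(0)\exp\bigl(c_{5}\lambda^{1-\alpha}t-c_{6}\delta\lambda^{2\sigma}t\bigr)
\qquad\forall t\geq 0,
\]
with constants independent of $\lambda$ and $\delta$. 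The recipe is to perturb a constant baseline $(\mu_1+\mu_2)/2$ by a \holder-$\alpha$ oscillation whose frequency is tuned to the natural one $\lambda\sqrt{c}$, producing a parametric resonance; since $2\sigma<1-\alpha$, for large $\lambda$ the exponent in the bound stays positive and unbounded in $t$, so the damping cannot quench the resonance.

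\textbf{Patching.} Next I would select a subsequence $\{\lambda_k\}$ of eigenvalues growing very fast and a rapidly decreasing $t_k\searrow 0$, and set
\[
c(t):=c_{\lambda_k}(t)\quad\text{for } t\in[t_{k+1},t_k],
\]
extended by a constant on $[t_0,+\infty)$. After translating each $c_{\lambda_k}$ so that values match at the joints, one checks $c\in\PS(\mu_{1},\mu_{2},\alpha,H')$ with $H'$ controlled by $H$: inside each $[t_{k+1},t_k]$ the \holder\ estimate is inherited from $c_{\lambda_k}$, and for $t,s$ in distinct joints the gap $|c(t)-c(s)|\leq 2(\mu_2-\mu_1)$ is dominated by $|t-s|^{\alpha}\geq(t_k-t_{k+1})^{\alpha}$ provided the $t_k$ shrink fast enough. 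Standard ODE estimates propagate $E_{\lambda_k}$ on $[0,t_{k+1}]$ and on $[t_k,+\infty)$ with loss at most $\exp(C\lambda_k^{1-\alpha})$, while the activation phase contributes the much larger factor $\exp(c_5\lambda_k^{1-\alpha}(t_k-t_{k+1}))$; choosing $t_k-t_{k+1}$ so that $\lambda_k^{1-\alpha}(t_k-t_{k+1})\gg\log(1+\lambda_k)$ yields a net amplification on the $\lambda_k$-component of order $\exp\bigl(c_8\lambda_k^{1-\alpha}/\log(1+\lambda_k)-c_9\delta\lambda_k^{2\sigma}t\bigr)$ at every fixed $t>0$.

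\textbf{Initial data, conclusion, and main obstacle.} To finish, set $u_i(0)=u_i'(0)=0$ off $\{\lambda_k\}$, and on each $\lambda_k$ choose initial values that both trigger the resonant solution of the first step and decay super-exponentially in $k$, so that $(u_0,u_1)\in\GG_{s,r,1/2}(A)\times\GG_{s,r,0}(A)$ for every $s>1/(1-\alpha)$ and every $r>0$; this is compatible with the amplification because Gevrey weights of the form $\exp(-r\lambda_k^{1/s})$ with $1/s<1-\alpha$ are much milder than the resonant gain. At any $t>0$ the surviving $\lambda_k$-components then exceed $\exp(c\,\lambda_k^{1-\alpha}/\log(1+\lambda_k))$, which outgrows every $\exp(R\lambda_k^{1/S})$ with $S>1/(1-\alpha)$, so $(u(t),u'(t))$ cannot lie in any $\GG_{-S,R,1/2}(A)\times\GG_{-S,R,0}(A)$. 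The main obstacle is the joint calibration of $\{\lambda_k\}$, $\{t_k\}$ and the initial data so that the global \holder\ constant of $c$ stays bounded by $H$, each activation interval is long enough to produce the full $\lambda_k^{1-\alpha}$ factor in the exponent, and the initial data retain Gevrey regularity of every supercritical order; the logarithmic loss in the growth rate is precisely the price of meeting all three constraints at once, and it is harmless because $\lambda_k^{1-\alpha}/\log(1+\lambda_k)$ still dominates $\lambda_k^{1/S}$ for every $S>1/(1-\alpha)$.
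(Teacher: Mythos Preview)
Your approach is correct and matches the classical construction from \cite{dgcs} and its damped extension in \cite{gg:dgcs-strong}, which is where Theorem~\ref{thmbibl:dgcs} is actually proved; the present paper quotes it as a previous result and gives no proof of its own. The paper does, however, remark that its new machinery would furnish an alternative and shorter argument: Theorem~\ref{thm:activators} produces, via Baire category, a residual set of propagation speeds that activate countably many frequencies \emph{simultaneously} at the full rate $\exp(\mu_5\lambda_i^{1-\alpha}t)$, with no logarithmic loss. The contrast is precisely the ``main obstacle'' you flag at the end: your patching scheme activates the frequencies one at a time in disjoint shrinking intervals, which forces the delicate joint calibration of $\{\lambda_k\}$, $\{t_k\}$ and the initial data and costs a factor $\log(1+\lambda_k)$ in the exponent (harmless here because the case is subcritical, but fatal in the critical cases the paper addresses), whereas the universal-activator route sidesteps the entire calibration and leaves only the elementary choice of initial data as in the proof of Theorems~\ref{thm:gevrey-critical} and~\ref{thm:damping-critical}.
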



\setcounter{equation}{0}
\section{Statement of our results}\label{sec:statements}

In order to state our results in a more compact way, we introduce two variants of Gevrey spaces and hyperdistributions.

\begin{defn}
\begin{em}

Let $u$ be a sequence $\{u_{i}\}$ of real numbers.
\begin{itemize}

\item  Let $\beta$ and $s$ be real numbers, with $s>0$. We say that $u\in\GG_{s,\log,\beta}(A)$ if
\begin{equation}
\|u\|_{\GG_{s,\log,\beta}(A)}^{2}:=\sum_{i=0}^{\infty}u_{i}^{2}(1+\lambda_{i})^{4\beta}\exp\left(\frac{2\lambda_{i}^{1/s}}{\log(2+\lambda_{i})}\right)<+\infty.
\nonumber
\end{equation}

\item  Let $\beta$ and $S$ be real numbers, with $S>0$. We say that $u\in\GG_{-S,\log,\beta}(A)$ if
\begin{equation}
\|u\|_{\GG_{-S,\log,\beta}(A)}^{2}:=\sum_{i=0}^{\infty}u_{i}^{2}(1+\lambda_{i})^{4\beta}\exp\left(-\frac{2\lambda_{i}^{1/S}}{\log(2+\lambda_{i})}\right)<+\infty.
\nonumber
\end{equation}

\end{itemize}

\end{em}
\end{defn}

The key property of these spaces are the following two implications:
\begin{eqnarray}
u\in\GG_{s,\log,\beta}(A) & \Longrightarrow & \forall s'>s,\ \forall r>0,\ \forall\gamma\in\re\quad u\in\GG_{s',r,\gamma}(A),
\label{log-gevrey}  \\[1ex]
u\not\in\GG_{-S,\log,\beta}(A) & \Longrightarrow & \forall S'>S,\ \forall R>0,\ \forall\gamma\in\re\quad u\not\in\GG_{-S',R,\gamma}(A).
\label{log-hyper}
\end{eqnarray}

We note that the same properties hold true if the logarithm is replaced by any function that tends to $+\infty$ as $\lambda_{i}\to +\infty$.


Our first result concerns the local nature of Theorem~\ref{thmbibl:gevrey}, both in the non-dissipative and in the dissipative case. We show that an initial condition, with finite radius in a Gevrey space of critical order, can undergo, during the evolution, a degradation of its radius and become a hyperdistribution (and nothing more) after a finite time.

\begin{thm}[Severe derivative loss for large times for critical Gevrey index]\label{thm:gevrey-critical}

Let $\HH$ be a Hilbert space, and let $A$ be a nonnegative discrete multiplication operator that we assume to be unbounded. Let $\mu_{1}$, $\mu_{2}$, $\alpha$, $H$ be real numbers satisfying (\ref{hp:accH}). Let $\delta$, $\sigma$, $s$, $S$, $r_{0}$ be real numbers such that
\begin{equation}
\delta\geq 0,
\qquad\quad
2\sigma<1-\alpha,
\qquad\quad
s=S=\frac{1}{1-\alpha},
\qquad\quad
r_{0}>0.
\nonumber
\end{equation}

Let us set
\begin{equation}
t_{0}:=\frac{32\mu_{2}^{(1+\alpha)/2}}{H}\cdot r_{0}.
\nonumber
\end{equation}

Then the set of propagation speeds $c\in\PS(\mu_{1},\mu_{2},\alpha,H)$ for which equation (\ref{eqn:main}) admits a solution satisfying
\begin{equation}
(u(0),u'(0))\in\GG_{s,r_{0},1/2}(A)\times\GG_{s,r_{0},0}(A)
\nonumber
\end{equation}
and 
\begin{equation}
(u(t),u'(t))\not\in\GG_{-S,\log,1/2}(A)\times\GG_{-S,\log,0}(A)
\qquad
\forall t>t_{0}
\label{th:dgcs-gevrey}
\end{equation}
is residual in $\PS(\mu_{1},\mu_{2},\alpha,H)$ (with respect to the $L^{\infty}$ distance).

\end{thm}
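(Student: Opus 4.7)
The strategy is to apply the Baire category theorem to the complete metric space $\PS(\mu_{1},\mu_{2},\alpha,H)$, in line with the program outlined in the introduction. For each integer $n\geq 1$, and for suitable positive constants $c_{5},K$ depending only on $\mu_{1},\mu_{2},\alpha,H$, I would introduce the set $U_{n}$ of propagation speeds for which there exist a frequency $\lambda\geq n$ in the spectrum of $A$ and a nontrivial solution of \eqref{eqn:u-lambda} satisfying
\[
E_{\lambda}(t_{0})\geq K\exp\!\bigl(c_{5}\lambda^{1-\alpha}t_{0}\bigr) E_{\lambda}(0).
\]
The goal is to show that each $U_{n}$ is open and dense, so that $\bigcap_{n}U_{n}$ is a residual set of ``universal activators''. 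Openness follows from the continuous dependence of $u_{\lambda}(t)$ on $c$ in the $L^{\infty}$ topology at fixed $\lambda$: the energy ratio is then continuous in $c$, so the strict inequality defining $U_{n}$ survives small perturbations.

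Density is the core technical point, to be established in Section~\ref{sec:basic}. Given $c_{0}\in\PS(\mu_{1},\mu_{2},\alpha,H)$ and $\varepsilon>0$, I would construct $c_{\lambda}$ with $\|c_{\lambda}-c_{0}\|_{\infty}<\varepsilon$, still inside $\PS(\mu_{1},\mu_{2},\alpha,H)$, and activating an arbitrarily large frequency $\lambda$ via a resonance between $\lambda$ and the oscillation frequency of $c_{\lambda}-c_{0}$. The subtle point is that the naive estimate $\Hold_{\alpha}(c_{\lambda})\leq \Hold_{\alpha}(c_{0})+\Hold_{\alpha}(c_{\lambda}-c_{0})$ would force a factor of $2$ and exit the admissible class; Lemma~\ref{lemma:holder} saves the day, since $c_{0}$ oscillates slowly while $c_{\lambda}-c_{0}$ oscillates at a much faster scale, so the two \holder\ constants effectively combine as a maximum rather than as a sum.

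Given any $c$ in the residual set $\bigcap_{n}U_{n}$, I would extract a strictly increasing sequence of activated frequencies $\lambda_{k}\to+\infty$ together with the associated solutions of \eqref{eqn:u-lambda}, renormalized so that $E_{\lambda_{k}}(0)\sim \exp(-2r_{0}\lambda_{k}^{1-\alpha})/k^{2}$. Assembling these into Fourier components yields initial data $(u_{0},u_{1})\in\GG_{s,r_{0},1/2}(A)\times\GG_{s,r_{0},0}(A)$ thanks to the identity $s=1/(1-\alpha)$, while the activation estimate gives
\[
E_{\lambda_{k}}(t)\geq K \exp\!\bigl(c_{5}\lambda_{k}^{1-\alpha}t-2\delta\lambda_{k}^{2\sigma}t-2r_{0}\lambda_{k}^{1-\alpha}\bigr).
\]
The condition $2\sigma<1-\alpha$ makes the dissipation term negligible as $\lambda_{k}\to\infty$, and the explicit value of $t_{0}$ is calibrated so that $c_{5}t>2r_{0}$ for every $t>t_{0}$. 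Since $1/S=1-\alpha$, the weight $\exp(-2\lambda_{k}^{1-\alpha}/\log(2+\lambda_{k}))$ appearing in the definition of $\GG_{-S,\log,\beta}(A)$ cannot compensate for this exponential gain, so the corresponding series diverges and \eqref{th:dgcs-gevrey} follows (for either the $u$ or $u'$ component, depending on which of the two addends of $E_{\lambda_{k}}(t)$ carries the growth).

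The main obstacle lies in the construction of a \emph{simultaneous} activator in Section~\ref{sec:basic}. The iterative counterexamples of~\cite{dgcs,gg:dgcs-strong} bypass this difficulty by dedicating disjoint, vanishingly short time intervals to successive frequencies, a device that inevitably produces a $\log(1+\lambda_{k})$ loss in the exponent and is incompatible with a finite-radius, Gevrey-critical initial datum being simultaneously ``bad'' at any given time $t$. Replacing the iteration by a Baire argument shifts the whole burden onto the single-frequency activator lemma and onto the counterintuitive \holder-sum Lemma~\ref{lemma:holder} that is needed to keep the perturbed propagation speed admissible.
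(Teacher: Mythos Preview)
Your overall strategy matches the paper's: Baire category on $\PS(\mu_1,\mu_2,\alpha,H)$, density via the single-frequency activation construction of Section~\ref{sec:basic}, and Lemma~\ref{lemma:holder} to keep the perturbed speed admissible. However, there is a genuine gap in how you set up the Baire scheme. Your sets $U_n$ require the growth estimate only at the \emph{single} time $t_0$, yet later you invoke $E_{\lambda_k}(t)\geq K\exp(c_5\lambda_k^{1-\alpha}t-\cdots)$ for arbitrary $t$. This upgrade is unjustified: a solution can grow up to time $t_0$ and decay afterwards, and the only a~priori control for $t>t_0$ is the two-sided energy estimate, which gives $E_{\lambda_k}(t)\geq c_1^{-1}E_{\lambda_k}(t_0)\exp\bigl(-c_2\lambda_k^{1-\alpha}(t-t_0)\bigr)$. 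Under your calibration $c_5 t_0=2r_0$ this yields divergence of the relevant series only for $t<t_0$, the wrong side of the threshold.

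The paper resolves this quantifier problem by passing to the complement. The set $\CC$ of non-activators is where the growth bound fails at \emph{some} $t\geq 0$; restricting $t$ to compact intervals $[0,k]$ and discretising the remaining parameters writes $\CC$ as a countable union of closed sets $\CC_k$, each shown to have empty interior via Proposition~\ref{prop:activator}. The resulting universal activators (Definition~\ref{defn:activator}) then satisfy the $\limsup$ growth condition \eqref{th:activator} at \emph{every} $t\geq 0$, which is exactly what is used (see \eqref{limsup-Phi-i}) to make the series \eqref{equiv:t>0} diverge at each fixed $t>t_0$. You could repair your argument by double-indexing the open sets with both a frequency threshold and a time horizon $[0,m]$, requiring the growth uniformly on $[0,m]$; but that is essentially the dual of the paper's construction.
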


We point out that, due to (\ref{log-hyper}), a derivative loss of the form (\ref{th:dgcs-gevrey}) implies a derivative loss of the form
\begin{equation}
(u(t),u'(t))\not\in\GG_{-S',R,1/2}(A)\times\GG_{-S',R,0}(A)
\qquad
\forall S'>\frac{1}{1-\alpha},
\quad
\forall R>0,
\quad
\forall t> t_{0},
\nonumber
\end{equation}
which is the largest possible derivative loss compatible with Theorem~\ref{thmbibl:hyper}.


Our second result concerns the dissipative equation, and addresses the critical case where $\alpha=1-2\sigma$ and $\delta$ is small enough. In this regime, if initial data are ``not enough Gevrey regular'', solutions can undergo an instantaneous derivative loss, as severe as allowed by Theorem~\ref{thmbibl:hyper}. We recall that, with the same values of $\alpha$ and $\sigma$, but large enough $\delta$, Theorem~\ref{thmbibl:sobolev} shows well-posedness in Sobolev spaces.

\begin{thm}[Instantaneous severe derivative loss for small critical damping]\label{thm:damping-critical}

Let $\HH$ be a Hilbert space, and let $A$ be a nonnegative discrete multiplication operator that we assume to be unbounded. Let $\mu_{1}$, $\mu_{2}$, $\alpha$, $H$ be real numbers satisfying (\ref{hp:accH}). Let $\delta$, $\sigma$, $s$, $S$ be real numbers such that
\begin{equation}
0\leq\delta<\frac{H}{32\mu_{2}^{(1+\alpha)/2}},
\qquad\quad
2\sigma=1-\alpha,
\qquad\quad
s=S=\frac{1}{1-\alpha}.
\nonumber
\end{equation}

Then the set of propagation speeds $c\in\PS(\mu_{1},\mu_{2},\alpha,H)$ for which equation (\ref{eqn:main}) admits a solution satisfying
\begin{equation}
(u(0),u'(0))\in\GG_{s,\log,1/2}(A)\times\GG_{s,\log,0}(A)
\label{hp:dgcs-gevreydata}
\end{equation}
and 
\begin{equation}
(u(t),u'(t))\not\in\GG_{-S,\log,1/2}(A)\times\GG_{-S,\log,0}(A)
\qquad
\forall t> 0
\label{th:dgcs-gevrey-bis}
\end{equation}
is residual in $\PS(\mu_{1},\mu_{2},\alpha,H)$  (with respect to the $L^{\infty}$ distance).

\end{thm}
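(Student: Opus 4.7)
The plan is to reduce the theorem to the existence of a residual set of \emph{universal activators}, i.e.\ propagation speeds $c\in\PS(\mu_{1},\mu_{2},\alpha,H)$ for which there exists an increasing sequence of frequencies $\{\lambda_{k}\}\to+\infty$ and, for every $k$, a nontrivial solution $\tilde u_{\lambda_{k}}$ to (\ref{eqn:u-lambda}) with this $c$ and $\lambda=\lambda_{k}$, satisfying
\[
E_{\lambda_{k}}(t)\geq c_{10}\,E_{\lambda_{k}}(0)\exp\!\left(c_{11}\lambda_{k}^{1-\alpha}t-c_{12}\delta\lambda_{k}^{2\sigma}t\right)\qquad\forall t\geq 0,
\]
for constants $c_{10},c_{11},c_{12}>0$ depending only on $\mu_{1},\mu_{2},\alpha,H$. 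The single-frequency block of Section~\ref{sec:basic} produces such an activation by perturbing any smooth enough base speed in a controlled way, and the Baire category argument of Section~\ref{sec:baire} upgrades this to simultaneous activation of countably many frequencies on a residual subset of $\PS(\mu_{1},\mu_{2},\alpha,H)$.

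In the critical regime $2\sigma=1-\alpha$ both exponents collapse onto $\lambda_{k}^{1-\alpha}$, so the lower bound reads
\[
E_{\lambda_{k}}(t)\geq c_{10}\,E_{\lambda_{k}}(0)\exp\!\left(\gamma\lambda_{k}^{1-\alpha}t\right),\qquad \gamma:=c_{11}-c_{12}\delta.
\]
The quantitative smallness assumption $\delta<H/(32\mu_{2}^{(1+\alpha)/2})$ is chosen exactly so that $\gamma>0$: the explicit numerical factor $H/(32\mu_{2}^{(1+\alpha)/2})$ is precisely the ratio $c_{11}/c_{12}$ that the basic construction produces, and is therefore dictated by the amplitudes and oscillation frequencies used to perturb the base speed.

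Given a universal activator $c$, the pathological very weak solution is built by concentrating data on the activated frequencies. Choose the $\lambda_{k}$-th Fourier component of $(u_{0},u_{1})$ to be $a_{k}$ times $(\tilde u_{\lambda_{k}}(0),\tilde u'_{\lambda_{k}}(0))$, with $\tilde u_{\lambda_{k}}$ normalised so that $E_{\lambda_{k}}(0)=1$, and amplitudes
\[
a_{k}^{2}:=\exp\!\left(-\frac{4\lambda_{k}^{1-\alpha}}{\log(2+\lambda_{k})}\right).
\]
By linearity the $\lambda_{k}$-th Fourier component of the corresponding very weak solution equals $a_{k}\tilde u_{\lambda_{k}}(t)$. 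Since $s=1/(1-\alpha)$ makes $\lambda_{k}^{1/s}=\lambda_{k}^{1-\alpha}$, the Gevrey-type norm of the data reduces, up to constants, to
\[
\sum_{k}a_{k}^{2}\exp\!\left(\frac{2\lambda_{k}^{1-\alpha}}{\log(2+\lambda_{k})}\right)=\sum_{k}\exp\!\left(-\frac{2\lambda_{k}^{1-\alpha}}{\log(2+\lambda_{k})}\right)<+\infty,
\]
giving (\ref{hp:dgcs-gevreydata}); while the $k$-th term of the corresponding hyperdistribution norm at time $t>0$ is bounded below by a constant multiple of
\[
a_{k}^{2}\exp\!\left(2\gamma\lambda_{k}^{1-\alpha}t-\frac{2\lambda_{k}^{1-\alpha}}{\log(2+\lambda_{k})}\right)=\exp\!\left(2\lambda_{k}^{1-\alpha}\!\left(\gamma t-\frac{3}{\log(2+\lambda_{k})}\right)\right),
\]
which diverges along $\lambda_{k}\to+\infty$ for every fixed $t>0$, yielding (\ref{th:dgcs-gevrey-bis}).

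The main obstacle is the quantitative single-frequency activator of Section~\ref{sec:basic} in this critical dissipative regime: one must design a perturbation of a base speed whose resonance effect beats the damping \emph{uniformly} in $\lambda$, produces the sharp constants $c_{11}$ and $c_{12}$ above with the explicit ratio needed to match the hypothesis on $\delta$, and keeps the H\"older constant of the perturbed speed below $H$ rather than below $2H$. This last point rests on the ``H\"older maximum'' phenomenon mentioned in the introduction and formalised in Lemma~\ref{lemma:holder}: the base speed and its perturbation must oscillate at sufficiently separated scales, so that their H\"older seminorms interact as a maximum rather than a sum. Once this quantitative activator is in place, the Baire step and the above choice of initial data are essentially routine, and the residual set of universal activators transfers directly into a residual set of speeds admitting the required pathological solution.
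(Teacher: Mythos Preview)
Your proposal is correct and follows essentially the same route as the paper: residual set of universal activators via the Baire argument of Section~\ref{sec:baire} (Theorem~\ref{thm:activators}), then explicit initial data concentrated on the activated eigenfrequencies with sub-Gevrey weights, and the identification $c_{11}/c_{12}=\mu_{5}/2=H/(32\mu_{2}^{(1+\alpha)/2})$ is exactly the paper's. One point to tighten: what the Baire step actually yields is the \emph{limsup} condition (\ref{th:activator}) for each fixed $t\geq 0$, not the uniform-in-$t$ bound $E_{\lambda_{k}}(t)\geq c_{10}\exp(\gamma\lambda_{k}^{1-\alpha}t)$ for every $k$ that you state; the limsup form is weaker but is all that is needed, since for each fixed $t>0$ it already prevents the general term of the hyperdistribution series from tending to zero.
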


We point out that, due to (\ref{log-gevrey}), condition (\ref{hp:dgcs-gevreydata}) implies that initial data are as close as possible to the Gevrey space that would guarantee regularity of solutions according to Theorem~\ref{thmbibl:gevrey}.

\begin{rmk}\label{rmk:data-residual}
\begin{em}

When (\ref{eqn:main}) has a solution with some derivative loss, then there is actually a residual set of solutions with the same derivative loss. More precisely, let us assume that, for some propagation speed $c(t)$, equation (\ref{eqn:main}) admits a solution satisfying (\ref{hp:dgcs-gevreydata}) and (\ref{th:dgcs-gevrey-bis}).  Then the set of initial data for which the solutions satisfies (\ref{th:dgcs-gevrey-bis}) is residual in the space that appears in (\ref{hp:dgcs-gevreydata}). This is again an application of the Baire category theorem (see section~\ref{sec:rmk}). An analogous remark applies to Theorem~\ref{thm:gevrey-critical}.

\end{em}
\end{rmk}

\begin{rmk}
\begin{em}

In the statements of all our results concerning derivative loss, we have always assumed that $\delta\geq 0$. This is just because we are focussing on equations either without dissipation, or with a ``true'' dissipation. On the other hand, those results hold true a fortiori if $\delta<0$, namely when the ``dissipation'' has the wrong sign.

\end{em}
\end{rmk}


The results of this paper should cover all the cases that were left open in previous literature, at least in the strictly hyperbolic case with \holder\ continuous propagation speed. The following pictures summarize the final state of the art. In the horizontal axis we represent the time-regularity of $c(t)$. With some abuse of notation, values $\alpha\in(0,1)$ mean that $c(t)$ is $\alpha$-H\"{o}lder continuous, $\alpha=1$ means that it is Lipschitz continuous, $\alpha>1$ means further regularity.  In the vertical axis we represent the space-regularity of initial data, where the value $s$ stands for Gevrey spaces of order $s$ (so that  higher values of $s$ mean lower regularity).  The curve is $s=(1-\alpha)^{-1}$.  

\def\grafico{\psplot{0}{0.95}{0.5 1 x 3 exp sub div}}
\psset{unit=10ex}

\noindent
\hfill
\pspicture(-0.5,-1)(2.5,3.5)
\psclip{\psframe[linestyle=none](0,0)(2,2)}
\pscustom[fillstyle=solid,fillcolor=Goldenrod,linestyle=none]{
\grafico
\psline[linestyle=none](1,2)(1,0)(0,0)}
\pscustom[fillstyle=solid,fillcolor=red,linestyle=none]{
\grafico
\psline[linestyle=none](0,2)}
{\psset{linewidth=1.5\pslinewidth}\grafico}%
\endpsclip
\psframe*[linecolor=ForestGreen](1,0)(2,2)
\psline[linewidth=0.7\pslinewidth]{->}(-0.3,0)(2.3,0)
\psline[linewidth=0.7\pslinewidth]{->}(0,-0.3)(0,2.3)
\psline[linewidth=1.5\pslinewidth](1,0)(1,2)
\uput[-90](2.1,0){$\alpha$}
\uput[-90](1,0){$1$}
\uput[180](0,2.1){$s$}
\uput[180](0,0.5){$1$}
\psdot(1,0)
\rput(1,-0.5){$\delta=0$}
\psframe*[linecolor=Goldenrod](0,2.6)(0.25,2.85)
\rput[Bl](0.4,2.6){Well-posedness in Gevrey spaces}
\psframe*[linecolor=ForestGreen](0,3)(0.25,3.25)
\rput[Bl](0.4,3){Well-posedness is Sobolev spaces}
\endpspicture
\hfill\hfill
\pspicture(-0.5,-1)(2.5,3.5)
\psframe*[linecolor=red](0,2.6)(0.25,2.85)
\rput[Bl](0.4,2.6){(DGCS)-phenomenon}
\psclip{\psframe[linestyle=none](0,0)(2,2)}
\pscustom[fillstyle=solid,fillcolor=Goldenrod,linestyle=none]{
\grafico
\psline[linestyle=none](1,2)(2,2)(2,0)(0,0)}
\pscustom[fillstyle=solid,fillcolor=red,linestyle=none]{
\grafico
\psline[linestyle=none](0,2)}
{\psset{linewidth=1.5\pslinewidth}\grafico}%
\psframe[fillstyle=solid,fillcolor=ForestGreen,linestyle=none](0.65,0)(2,2.1)
{\psset{linestyle=dashed}\grafico}%
\endpsclip
\psclip{\pscustom[linestyle=none]{\grafico\lineto(0,0.5)}}
\endpsclip
\psline[linewidth=0.7\pslinewidth]{->}(-0.3,0)(2.3,0)
\psline[linewidth=0.7\pslinewidth]{->}(0,-0.3)(0,2.3)
\psline[linewidth=0.7\pslinewidth,linestyle=dashed](1,0)(1,2)
\psline[linewidth=1.5\pslinewidth](0.65,0)(0.65,2)
\uput[-90](2.1,0){$\alpha$}
\uput[-90](0.65,0){$1-2\sigma$}
\psdot(0.65,0)
\uput[180](0,2.1){$s$}
\uput[180](0,0.5){$1$}
\rput(1,-0.5){$\delta>0$}
\endpspicture
\hfill\mbox{}

In the non-dissipative case $\delta=0$ we have well-posedness in Sobolev spaces if $\alpha\geq 1$ (this is a classical result), while for $\alpha\in(0,1)$ Theorem~\ref{thmbibl:gevrey} provides global well-posedness in Gevrey spaces of order $s<(1-\alpha)^{-1}$ and local well-posedness if $s=(1-\alpha)^{-1}$, and Theorem~\ref{thmbibl:dgcs} provides the (DGCS)-phenomenon for $s>(1-\alpha)^{-1}$. Finally, Theorem~\ref{thm:gevrey-critical} of this paper shows that in the critical case $s=(1-\alpha)^{-1}$ the solution can lose as many derivatives as possible after a finite time.

In the dissipative case $\delta>0$, the strong damping moves to the left the vertical line that represents the boundary of the region with Sobolev well-posedness. More precisely, Theorem~\ref{thmbibl:sobolev} provides well-posedness in Sobolev spaces if $\alpha>1-2\sigma$, while for $\alpha<1-2\sigma$ we have the same picture as in the non-dissipative case, again provided by Theorems~\ref{thmbibl:hyper},~\ref{thmbibl:gevrey},~\ref{thmbibl:dgcs}, and by Theorem~\ref{thm:gevrey-critical} of this paper. Finally, in the critical case $\alpha=1-2\sigma$, Theorem~\ref{thmbibl:sobolev} provides well-posedness in Sobolev spaces if $\delta$ is large enough, while for $\delta$ small enough Theorem~\ref{thm:damping-critical} shows that the (DGCS)-phenomenon is again possible, with instantaneous loss of as many derivatives as possible.  


\setcounter{equation}{0}
\section{Proofs}\label{sec:proofs}

\subsection{Asymptotic behavior of \holder\ constants}

In this subsection we prove a simple, but somewhat counterintuitive, result. The idea is that, under suitable assumptions, the \holder\ constant of the sum of two functions is not the sum of the \holder\ constants, but the maximum. The result holds true for functions between metric spaces, but we state it just in the setting of propagation speeds. In the sequel,
\begin{equation}
\Hold_{\alpha}(c):=\sup\left\{\frac{|c(t)-c(s)|}{|t-s|^{\alpha}}:(t,s)\in[0,+\infty)^{2},\ t\neq s\right\}
\nonumber
\end{equation}
denotes the \holder\ constant of a propagation speed $c(t)$.

\begin{lemma}[Asymptotic \holder\ constant of a sum]\label{lemma:holder}

Let $f_{n}:[0,+\infty)\to\re$ and $g_{n}:[0,+\infty)\to\re$ be two sequences of functions, and let $\alpha\in(0,1)$ be a real number.

Let us assume that
\begin{itemize}

\item  $f_{n}$ and $g_{n}$ are \holder\ continuous of order $\alpha$ for every $n\in\n$,

\item  $g_{n}\to 0$ uniformly in $[0,+\infty)$,

\item  there exists a real number $L$ such that
\begin{equation}
|f_{n}(t_{1})-f_{n}(t_{2})|\leq L|t_{1}-t_{2}|
\qquad
\forall n\in\n,
\quad
\forall (t_{1},t_{2})\in[0,+\infty)^{2}.
\label{hp:fn-lip}
\end{equation}

\end{itemize}

Then it turns out that
\begin{equation}
\limsup_{n\to +\infty}\Hold_{\alpha}(f_{n}+g_{n})\leq\max\left\{\limsup_{n\to +\infty}\Hold_{\alpha}(f_{n}),\limsup_{n\to +\infty}\Hold_{\alpha}(g_{n})\strut\right\}.
\nonumber
\end{equation}

\end{lemma}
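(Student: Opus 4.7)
My plan is to exploit the threshold trick: for each pair $(t_1,t_2)$ the Hölder quotient $|(f_n+g_n)(t_1)-(f_n+g_n)(t_2)|/|t_1-t_2|^\alpha$ is controlled in two different ways depending on whether $|t_1-t_2|$ is ``small'' or ``large'' compared to some threshold $\delta>0$. On small distances the Lipschitz hypothesis (\ref{hp:fn-lip}) makes $f_n$ contribute negligibly in the $\alpha$-Hölder seminorm, whereas on large distances the uniform smallness of $g_n$ makes $g_n$ contribute negligibly. So on each side of the threshold only one of the two terms gives a genuine Hölder contribution, which is how the max (rather than the sum) appears in the limit.

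Concretely, fix $\delta>0$ and $n$. For $|t_1-t_2|\le\delta$ I would write
\[
|f_n(t_1)-f_n(t_2)|\le L|t_1-t_2|=L|t_1-t_2|^{1-\alpha}\cdot|t_1-t_2|^\alpha\le L\delta^{1-\alpha}|t_1-t_2|^\alpha,
\]
and bound $g_n$ by its Hölder constant, getting
\[
\frac{|(f_n+g_n)(t_1)-(f_n+g_n)(t_2)|}{|t_1-t_2|^\alpha}\le L\delta^{1-\alpha}+\Hold_\alpha(g_n).
\]
For $|t_1-t_2|>\delta$ I would instead use the trivial estimate $|g_n(t_1)-g_n(t_2)|\le 2\|g_n\|_\infty$, yielding
\[
\frac{|(f_n+g_n)(t_1)-(f_n+g_n)(t_2)|}{|t_1-t_2|^\alpha}\le \Hold_\alpha(f_n)+\frac{2\|g_n\|_\infty}{\delta^\alpha}.
\]
Taking the supremum over $(t_1,t_2)$ gives
\[
\Hold_\alpha(f_n+g_n)\le\max\!\left\{L\delta^{1-\alpha}+\Hold_\alpha(g_n),\ \Hold_\alpha(f_n)+\frac{2\|g_n\|_\infty}{\delta^\alpha}\right\}.
\]

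To finish, given $\varepsilon>0$, I would first choose $\delta>0$ so small that $L\delta^{1-\alpha}\le\varepsilon$, and then pick $n_0$ so large that for $n\ge n_0$ one has $\Hold_\alpha(f_n)\le M_f+\varepsilon$, $\Hold_\alpha(g_n)\le M_g+\varepsilon$, and $2\|g_n\|_\infty/\delta^\alpha\le\varepsilon$, where $M_f$ and $M_g$ are the two limits superior on the right-hand side of the statement. The displayed inequality then gives $\Hold_\alpha(f_n+g_n)\le\max\{M_f,M_g\}+2\varepsilon$ for all $n\ge n_0$, so $\limsup_n\Hold_\alpha(f_n+g_n)\le\max\{M_f,M_g\}+2\varepsilon$, and letting $\varepsilon\to0^+$ concludes.

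The only subtle point is the order of quantifiers: the threshold $\delta$ must be chosen \emph{before} sending $n\to\infty$, because the large-distance estimate $2\|g_n\|_\infty/\delta^\alpha$ blows up as $\delta\to 0$ for fixed $n$. The hypothesis $\alpha<1$ is essential (so that $\delta^{1-\alpha}\to 0$), as is the uniform Lipschitz bound on $f_n$; both serve to decouple the scales at which $f_n$ and $g_n$ contribute. I do not expect a real obstacle here — the lemma is a clean two-scale estimate — but the presentation should make clear why one \emph{cannot} simply use the triangle inequality $\Hold_\alpha(f_n+g_n)\le\Hold_\alpha(f_n)+\Hold_\alpha(g_n)$, which would produce a sum rather than the desired maximum.
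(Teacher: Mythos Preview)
Your proof is correct and follows essentially the same two-scale argument as the paper: split according to whether $|t_1-t_2|$ is below or above a threshold, use the uniform Lipschitz bound on $f_n$ for small distances and the uniform smallness of $g_n$ for large distances, then let the threshold and $n$ tend to their limits in the right order. The only cosmetic difference is that the paper carries $\Hold_\alpha(f_n)$ and $\Hold_\alpha(g_n)$ through to the end and then takes the $\limsup$, whereas you replace them by $M_f+\varepsilon$ and $M_g+\varepsilon$ one step earlier; the content is identical.
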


\begin{proof}

For every real number $\ep>0$, let us choose real numbers $\Delta>0$ and $\eta>0$ such that
\begin{equation}
L\Delta^{1-\alpha}\leq\ep,
\qquad\qquad
2\eta\leq\ep\Delta^{\alpha},
\label{defn:delta-eta}
\end{equation}
and let $n_{0}\in\n$ be such that
\begin{equation}
|g_{n}(t)|\leq\eta
\qquad
\forall t\geq 0,
\quad
\forall n\geq n_{0}.
\label{eqn:gn-to-0}
\end{equation}

We claim that
\begin{equation}
|(f_{n}+g_{n})(t_{1})-(f_{n}+g_{n})(t_{2})|\leq\left(\max\left\{\Hold_{\alpha}(f_{n}),\Hold_{\alpha}(g_{n})\strut\right\}+\ep\right)|t_{1}-t_{2}|^{\alpha}
\label{claim:fn+gn}
\end{equation}
for every $n\geq n_{0}$, and every pair $(t_{1},t_{2})$ of nonnegative real numbers. To this end, we distinguish two cases according to the size of $t_{1}-t_{2}$.
\begin{itemize}

\item If $|t_{1}-t_{2}|\leq \Delta$, then from (\ref{hp:fn-lip}) and the first relation in (\ref{defn:delta-eta}) we obtain that
\begin{eqnarray*}
|(f_{n}+g_{n})(t_{1})-(f_{n}+g_{n})(t_{2})| & \leq & |f_{n}(t_{1})-f_{n}(t_{2})|+|g_{n}(t_{1})-g_{n}(t_{2})|
\\[0.5ex]
& \leq &  L|t_{1}-t_{2}|+\Hold_{\alpha}(g_{n})|t_{1}-t_{2}|^{\alpha}
\\[0.5ex]
& = &  \left(L|t_{1}-t_{2}|^{1-\alpha}+\Hold_{\alpha}(g_{n})\right)|t_{1}-t_{2}|^{\alpha}
\\[0.5ex]
& \leq &  \left(\ep+\Hold_{\alpha}(g_{n})\right)|t_{1}-t_{2}|^{\alpha},
\end{eqnarray*}
which implies (\ref{claim:fn+gn}) in this first case.

\item  If $|t_{1}-t_{2}|\geq \Delta$, then from (\ref{eqn:gn-to-0}) and the second relation in (\ref{defn:delta-eta}) we obtain that
\begin{eqnarray*}
|(f_{n}+g_{n})(t_{1})-(f_{n}+g_{n})(t_{2})| & \leq & |f_{n}(t_{1})-f_{n}(t_{2})|+|g_{n}(t_{1})|+|g_{n}(t_{2})|
\\[0.5ex]
& \leq &  \Hold_{\alpha}(f_{n})|t_{1}-t_{2}|^{\alpha}+2\eta
\\[0.5ex]
& \leq &  \left(\Hold_{\alpha}(f_{n})+\ep\right)|t_{1}-t_{2}|^{\alpha},
\end{eqnarray*}
which implies (\ref{claim:fn+gn}) also in this second case.

\end{itemize}

Since $\ep$ is arbitrary, the conclusion follows from (\ref{claim:fn+gn}).
\end{proof}


\subsection{The basic ingredient}\label{sec:basic}

This subsection is the technical core of the paper. We show that every given smooth propagation speed can be slightly modified, with a negligible effect on its upper/lower bounds and on its \holder\ constant, in order to produce a resonance effect with a large enough frequency~$\lambda$.

Let $c_{0}:[0,+\infty)\to(0,+\infty)$ be a positive function of class $C^{2}$. Let $\delta\geq 0$ and $\sigma\in(0,1/2)$ be two real numbers. For every $(\ep,\lambda,t)\in(0,+\infty)\times(0,+\infty)\times[0,+\infty)$, let us consider the functions
\begin{eqnarray}
a(\lambda,t) & := & \lambda\int_{0}^{t}c_{0}(s)^{1/2}\,ds, 
\label{defn:a}   \\[1ex]
b(\ep,\lambda,t) & := & \frac{\ep\lambda}{2}\int_{0}^{t}\frac{\sin^{2}(a(\lambda,s))}{c_{0}(s)^{1/2}}\,ds-\frac{1}{4}\log\frac{c_{0}(t)}{c_{0}(0)}-\delta\lambda^{2\sigma}t,
\label{defn:b}    \\[1ex]
\gamma(\ep,\lambda,t) & := & c_{0}(t)-\ep\sin(2a(\lambda,t))-\frac{\ep^{2}}{4}\frac{\sin^{4}(a(\lambda,t))}{c_{0}(t)}-\frac{5}{16}\frac{1}{\lambda^{2}}\left[\frac{c_{0}'(t)}{c_{0}(t)}\right]^{2}
\nonumber  \\[1ex]
& & \mbox{}+\frac{\ep}{2\lambda}\frac{c_{0}'(t)}{c_{0}(t)^{3/2}}\sin^{2}(a(\lambda,t))+\frac{1}{4\lambda^{2}}\frac{c_{0}''(t)}{c_{0}(t)}+\frac{\delta^{2}}{\lambda^{2-4\sigma}}.
\label{defn:gamma}
\end{eqnarray}

We observe that, in the special case where $c_{0}(t)\equiv m^{2}$ is a positive constant, we obtain the same functions that were used in~\cite[section~6]{gg:cjs-strong}. If in addition $m=1$ and $\delta=0$, we obtain the functions that were originally introduced in~\cite{dgcs}.

With a long but elementary computation, one can check that the function
\begin{equation}
w(\ep,\lambda,t):=\sin(a(\lambda,t))\exp(b(\ep,\lambda,t))
\label{defn:w}
\end{equation}
satisfies
\begin{equation}
\frac{\partial^{2}w}{\partial t^{2}}(\ep,\lambda,t)+2\delta\lambda^{2\sigma}\frac{\partial w}{\partial t}(\ep,\lambda,t)+\lambda^{2}\gamma(\ep,\lambda,t)w(\ep,\lambda,t)=0
\nonumber
\end{equation}
for every admissible value of the variables.

Admittedly, at a first glance it might be not so intuitive why this should be true and therefore, for the convenience of the reader, in appendix~\ref{appendix} we show a heuristic argument that leads to these definitions.

Our goal is showing that, for suitable values of $\ep$ and $\lambda$, the energy of the solution $w$ grows exponentially with time. To this end, we start by estimating from below the growth of $b(\ep,\lambda,t)$.

\begin{lemma}

Let $c_{0}:[0,+\infty)\to(0,+\infty)$ be a function that satisfies the strict hyperbolicity assumption (\ref{hp:c-sh}). Let us assume in addition that $c_{0}$ is of class $C^{1}$, and there exists a constant $L_{0}$ such that $|c_{0}'(t)|\leq L_{0}$ for every $t\geq 0$. Let $b(\ep,\lambda,t)$ be the function defined in (\ref{defn:b}).

Then it turns out that
\begin{equation}
b(\ep,\lambda,t)\geq\frac{\ep\lambda}{4\mu_{2}^{1/2}}\left(1-\frac{L_{0}}{4\mu_{1}^{3/2}}\frac{1}{\lambda}\right)t-\delta\lambda^{2\sigma}t-\frac{\ep}{8(\mu_{1}\mu_{2})^{1/2}}-\frac{1}{4}\log\frac{\mu_{2}}{\mu_{1}}
\label{th:est-b}
\end{equation}
for every $(\ep,\lambda,t)\in(0,+\infty)\times(0,+\infty)\times[0,+\infty)$.

\end{lemma}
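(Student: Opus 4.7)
The bound involves three terms from the definition of $b(\ep,\lambda,t)$, which I would handle separately. The damping term $-\delta\lambda^{2\sigma}t$ appears unchanged in the claim, and the logarithmic term is immediate: since $\mu_{1}\leq c_{0}(s)\leq\mu_{2}$, we have $-\tfrac{1}{4}\log(c_{0}(t)/c_{0}(0))\geq-\tfrac{1}{4}\log(\mu_{2}/\mu_{1})$. So the entire content of the lemma is a lower bound on the oscillatory integral
$$I(\ep,\lambda,t):=\frac{\ep\lambda}{2}\int_{0}^{t}\frac{\sin^{2}(a(\lambda,s))}{c_{0}(s)^{1/2}}\,ds.$$

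The first step is to use the half-angle identity $\sin^{2}x=(1-\cos(2x))/2$ to split
$$I(\ep,\lambda,t)=\frac{\ep\lambda}{4}\int_{0}^{t}\frac{ds}{c_{0}(s)^{1/2}}-\frac{\ep\lambda}{4}\int_{0}^{t}\frac{\cos(2a(\lambda,s))}{c_{0}(s)^{1/2}}\,ds.$$
The first ``mean'' integral is bounded below using $c_{0}(s)\leq\mu_{2}$, giving $\ep\lambda t/(4\mu_{2}^{1/2})$, which is exactly the leading coefficient in the claim. So all that remains is to control the oscillatory integral from above in absolute value.

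For the oscillatory part I would exploit the key identity $\partial_{s}a(\lambda,s)=\lambda c_{0}(s)^{1/2}$, which allows us to rewrite
$$\frac{\cos(2a(\lambda,s))}{c_{0}(s)^{1/2}}=\frac{1}{2\lambda c_{0}(s)}\cdot\frac{d}{ds}\sin(2a(\lambda,s)).$$
A single integration by parts then produces a boundary term $\sin(2a(\lambda,t))/(2\lambda c_{0}(t))$ (the contribution at $s=0$ vanishes because $a(\lambda,0)=0$) plus a remainder integral
$$\int_{0}^{t}\frac{\sin(2a(\lambda,s))\,c_{0}'(s)}{2\lambda c_{0}(s)^{2}}\,ds.$$
Using $|\sin|\leq 1$, $c_{0}\geq\mu_{1}$, and the Lipschitz bound $|c_{0}'|\leq L_{0}$, both pieces are bounded by quantities of the form $\mathrm{const}/\lambda$ and $L_{0}t\cdot\mathrm{const}/\lambda$, respectively. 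Multiplying by $\ep\lambda/4$ cancels the $\lambda$ in the denominator and produces precisely the negative correction terms $-\ep/(8(\mu_{1}\mu_{2})^{1/2})$ and $-\ep L_{0}t/(16\mu_{1}^{3/2}\mu_{2}^{1/2})$ after matching constants using the two-sided bound on $c_{0}$.

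The whole computation is essentially a classical WKB/phase-amplitude estimate, and I do not expect a serious obstacle: the one conceptually important point is that the $\lambda$ introduced by the oscillation of $\sin(2a(\lambda,s))$ exactly cancels the $\lambda$ in $I$, so the oscillatory contribution is $O(1)+O(t)$ rather than $O(\lambda t)$; this is what guarantees that the ``resonant'' linear growth $\ep\lambda t/(4\mu_{2}^{1/2})$ dominates for large $\lambda$. The remaining effort is pure bookkeeping of constants, since the hard part of this subsection was the algebraic design of $a$, $b$, $\gamma$ so that $w$ in (\ref{defn:w}) solves the associated ODE — not the growth estimate on $b$ itself.
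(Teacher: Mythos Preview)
Your approach is essentially the paper's: half-angle identity followed by a single integration by parts exploiting $a'(\lambda,s)=\lambda c_{0}(s)^{1/2}$. The only difference is the order of operations. The paper first uses $1/c_{0}(s)^{1/2}\geq 1/\mu_{2}^{1/2}$ to pull this factor outside the integral, and only then splits and integrates $\int_{0}^{t}\cos(2a)\,ds$ by parts; this produces denominators $c_{0}^{1/2}$ and $c_{0}^{3/2}$ in the boundary and remainder terms, which after bounding by $\mu_{1}$ and multiplying by $\ep\lambda/(4\mu_{2}^{1/2})$ give exactly $\ep/(8(\mu_{1}\mu_{2})^{1/2})$ and $\ep L_{0}t/(16\mu_{1}^{3/2}\mu_{2}^{1/2})$. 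With your ordering (keeping $1/c_{0}^{1/2}$ inside through the integration by parts) the denominators become $c_{0}$ and $c_{0}^{2}$, and you end up with $\ep/(8\mu_{1})$ and $\ep L_{0}t/(8\mu_{1}^{2})$, which are strictly larger since $\mu_{1}<\mu_{2}$. So the claim that your route yields ``precisely'' the stated correction terms is not accurate; you get a correct lower bound of the same shape but with slightly worse constants. For every later use in the paper this is immaterial.
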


\begin{proof}

From the strict hyperbolicity assumption (\ref{hp:c-sh}) we deduce that
\begin{equation}
b(\ep,\lambda,t)\geq\frac{\ep\lambda}{2\mu_{2}^{1/2}}\int_{0}^{t}\sin^{2}(a(\lambda,s))\,ds-\frac{1}{4}\log\frac{\mu_{2}}{\mu_{1}}-\delta\lambda^{2\sigma}t.
\nonumber
\end{equation}

Moreover, by elementary trigonometry we know that
\begin{equation}
\int_{0}^{t}\sin^{2}(a(\lambda,s))\,ds=\frac{t}{2}-\frac{1}{2}\int_{0}^{t}\cos(2a(\lambda,s))\,ds.
\nonumber
\end{equation}

Therefore, it remains to show that
\begin{equation}
\left|\int_{0}^{t}\cos(2a(\lambda,s))\,ds\right|\leq\frac{1}{2\mu_{1}^{1/2}}\cdot\frac{1}{\lambda}+\frac{L_{0}}{4\mu_{1}^{3/2}}\cdot\frac{t}{\lambda}.
\label{est:int-cos}
\end{equation}

In order to estimate this oscillating integral, we integrate by parts in the usual way, and we obtain that (here primes denote derivatives with respect to the variable $s$)
\begin{eqnarray}
\int_{0}^{t}\cos(2a(\lambda,s))\,ds & = & \int_{0}^{t}2a'(\lambda,s)\cos(2a(\lambda,s))\cdot\frac{1}{2a'(\lambda,s)}\,ds
\nonumber
\\[1ex]
& = & \left[\frac{\sin(2a(\lambda,s))}{2a'(\lambda,s)}\right]_{s=0}^{s=t}+\frac{1}{2}\int_{0}^{t}\sin(2a(\lambda,s))\cdot\frac{a''(\lambda,s)}{a'(\lambda,s)^{2}}\,ds.
\nonumber
\end{eqnarray}

Now we recall that $a(\lambda,s)$ is defined by (\ref{defn:a}), and therefore
\begin{equation}
\int_{0}^{t}\cos(2a(\lambda,s))\,ds=\frac{\sin(2a(\lambda,t))}{2\lambda c_{0}(t)^{1/2}}+\frac{1}{4}\int_{0}^{t}\sin(2a(\lambda,s))\cdot\frac{c_{0}'(s)}{\lambda c_{0}(s)^{3/2}}\,ds.
\nonumber
\end{equation}

Exploiting again the strict hyperbolicity (\ref{hp:c-sh}), and the uniform bound on $c_{0}'(t)$, we obtain (\ref{est:int-cos}). 
\end{proof}


Let $\{\lambda_{n}\}$ and $\{\ep_{n}\}$ be two sequences of positive real numbers such that $\lambda_{n}\to +\infty$ and $\ep_{n}\to 0$ as $n\to +\infty$. For every positive integer $n$, let us define
\begin{equation}
c_{n}(t):=\gamma(\ep_{n},\lambda_{n},t)
\qquad
\forall t\geq 0.
\label{defn:cn}
\end{equation}

Let $w_{n}(t)$ denote the solution to the problem
\begin{equation}
w_{n}''(t)+2\delta\lambda_{n}^{2\sigma}w_{n}'(t)+\lambda_{n}^{2}c_{n}(t)w_{n}(t)=0,
\nonumber
\end{equation}
with initial data
\begin{equation}
w_{n}(0)=0,
\qquad
w_{n}'(0)=1.
\nonumber
\end{equation}

The key properties of $c_{n}(t)$ and $w_{n}(t)$ are stated in the following result.

\begin{prop}[Activation of large enough frequencies]\label{prop:activator}

Let $\delta\geq 0$ and $\sigma\in(0,1/2)$ be two real numbers. Let $c_{0}:[0,+\infty)\to(0,+\infty)$ be a function that satisfies the strict hyperbolicity assumption (\ref{hp:c-sh}). Let us assume in addition that $c_{0}$ is of class $C^{3}$ with
\begin{equation}
\sup\left\{|c'(t)|+|c''(t)|+|c'''(t)|:t\geq 0\strut\right\}<+\infty.
\label{hp:C3}
\end{equation}

Let $\{\lambda_{n}\}$ and $\{\ep_{n}\}$ be two sequences of positive real numbers such that $\lambda_{n}\to +\infty$ and $\ep_{n}\to 0$ as $n\to +\infty$, and 
\begin{equation}
\limsup_{n\to+\infty}(\ep_{n}\lambda_{n}^{\alpha})<+\infty.
\label{hp:limsup-eln}
\end{equation}

Let us define $c_{n}(t)$ and $w_{n}(t)$ as above, and let us set
\begin{equation}
\mu_{3}:=\frac{\mu_{1}\min\{1,\mu_{1}\}}{2\mu_{2}^{2}},
\qquad\qquad
\mu_{4}:=\frac{1}{4\mu_{2}^{1/2}}.
\label{defn:mu34}
\end{equation}

Then the following statements hold true.
\begin{itemize}

\item  \emph{(Uniform convergence)} It turns out that
\begin{equation}
c_{n}\to c_{0}
\quad
\mbox{uniformly in }[0,+\infty),
\label{th:cn-to-c0}
\end{equation}

\item  \emph{(Asymptotic behavior of \holder\ constant)} It turns out that
\begin{equation}
\limsup_{n\to +\infty}\Hold_{\alpha}(c_{n})\leq\max\left\{\Hold_{\alpha}(c_{0}),2\mu_{2}^{\alpha/2}\limsup_{n\to +\infty}\left(\ep_{n}\lambda_{n}^{\alpha}\right)\right\}.
\label{th:cn-holder}
\end{equation}

\item  \emph{(Exponential growth of the solution)} For every $n$ large enough it turns out that
\begin{equation}
|w_{n}'(t)|^{2}+\lambda_{n}^{2}|w_{n}(t)|^{2}\geq\mu_{3}\exp\left(\mu_{4}\ep_{n}\lambda_{n}\,t-2\delta\lambda_{n}^{2\sigma}t\right)
\qquad
\forall t\geq 0.
\label{th:est-wn}
\end{equation}

\end{itemize}

\end{prop}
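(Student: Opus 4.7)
The three assertions naturally split into three independent arguments, all based on term-by-term analysis of the six summands in (\ref{defn:gamma}) plus the explicit formula (\ref{defn:w}). For the \emph{uniform convergence} $c_n \to c_0$, I would simply bound each of the six terms in $\gamma(\ep_n,\lambda_n,t)-c_0(t)$ in absolute value using (\ref{hp:c-sh}) and (\ref{hp:C3}), which yields bounds of the form $O(\ep_n)$, $O(\ep_n^2)$, $O(\ep_n/\lambda_n)$, $O(1/\lambda_n^2)$, and $O(\delta^2/\lambda_n^{2-4\sigma})$; the last is crucial and uses $2-4\sigma > 0$, i.e.\ $\sigma<1/2$. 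All of these vanish uniformly in $t$ as $n\to+\infty$.

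The \emph{asymptotic Hölder estimate} (\ref{th:cn-holder}) is the content-rich step, and the place where Lemma~\ref{lemma:holder} earns its keep: applied to the constant sequence $f_n\equiv c_0$ (uniformly Lipschitz by (\ref{hp:C3})) and to $g_n := c_n - c_0$ (uniformly infinitesimal by the previous step), it reduces the problem to estimating $\Hold_\alpha(g_n)$. Of the six summands composing $g_n$, only the leading one $-\ep_n\sin(2a(\lambda_n,t))$ can contribute a non-vanishing Hölder constant: its sup-norm is $\leq\ep_n$ and its Lipschitz constant $\leq 2\ep_n\lambda_n\mu_2^{1/2}$, so the standard interpolation bound $\Hold_\alpha(h)\leq(2\|h\|_\infty)^{1-\alpha}\cdot L^\alpha$ (valid for any bounded $L$-Lipschitz $h$) gives
\[
\Hold_\alpha\bigl(-\ep_n\sin(2a(\lambda_n,\cdot))\bigr)\leq 2\mu_2^{\alpha/2}\ep_n\lambda_n^\alpha.
\]
The same interpolation applied to the other five summands produces Hölder constants of the orders $O(\ep_n^2\lambda_n^\alpha)$, $O(1/\lambda_n^2)$, $O(\ep_n/\lambda_n^{1-\alpha})$ etc., all of which vanish thanks to (\ref{hp:limsup-eln}). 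Passing to $\limsup$ and invoking Lemma~\ref{lemma:holder} delivers (\ref{th:cn-holder}).

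For the \emph{exponential lower bound} (\ref{th:est-wn}), I would first rescale the function (\ref{defn:w}) by setting $w_n(t):=[\lambda_n c_0(0)^{1/2}]^{-1}\sin(a(\lambda_n,t))\exp(b(\ep_n,\lambda_n,t))$ so that the prescribed initial data $w_n(0)=0$, $w_n'(0)=1$ are satisfied. A direct differentiation then gives the identity
\[
|w_n'(t)|^2+\lambda_n^2|w_n(t)|^2=\frac{\exp(2b)}{c_0(0)}\left\{\left[c_0(t)^{1/2}\cos(a)+\frac{\sin(a)}{\lambda_n}\partial_t b\right]^2+\sin^2(a)\right\}.
\]
Plugging in the explicit form of $\partial_t b$ and using $\ep_n\to 0$, $\lambda_n\to+\infty$, and $2\sigma-1<0$ shows that the mixed term satisfies $|\sin(a)\cdot\partial_t b/\lambda_n|\leq\eta_n|\sin(a)|$ with $\eta_n\to 0$ uniformly in $t$; a two-case argument (according to whether $\sin^2(a)\leq 1/2$ or $\geq 1/2$) then bounds the curly bracket from below by a positive constant depending only on $\mu_1$ and $\mu_2$, valid for every sufficiently large $n$. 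Inserting the lower bound (\ref{th:est-b}) for $b$ — whose dominant linear contribution to $2b$ is exactly $(2\mu_4\ep_n\lambda_n-2\delta\lambda_n^{2\sigma})t$ up to a $(1-L_0/(4\mu_1^{3/2}\lambda_n))$ correction that becomes $\geq 1/2$ for $n$ large — and grouping the bounded leftover prefactors into the single constant $\mu_3$ defined in (\ref{defn:mu34}) completes the estimate.

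The one genuine subtlety is the Hölder step: the naive inequality $\Hold_\alpha(c_n)\leq\Hold_\alpha(c_0)+\Hold_\alpha(c_n-c_0)$ would be fatal, because $\Hold_\alpha(c_n-c_0)$ is of the same order as $\Hold_\alpha(c_0)$, so subadditivity would drive us out of $\PS(\mu_1,\mu_2,\alpha,H)$. Lemma~\ref{lemma:holder} is precisely what replaces ``sum'' by ``maximum'' for oscillating perturbations; once it is in hand, the rest of the proof is bookkeeping around explicit formulas.
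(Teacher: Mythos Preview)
Your proposal is correct and follows essentially the same approach as the paper's proof. The only cosmetic differences are that the paper places the non-oscillating $O(1/\lambda_n^2)$ correction terms into $f_n$ (rather than taking $f_n\equiv c_0$) when invoking Lemma~\ref{lemma:holder}, and it obtains the lower bound on the energy prefactor by expanding the cross term $2a_n'b_n'\cos a_n\sin a_n$ directly rather than via a $\sin^2 a\lessgtr 1/2$ case split; both routes give the stated constants $\mu_3$ and $\mu_4$.
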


\begin{proof}

Since $\lambda_{n}\to +\infty$ and $\ep_{n}\to 0$ as $n\to +\infty$, and since $2-4\sigma>0$, the uniform convergence follows from the strict hyperbolicity (\ref{hp:c-sh}), and from the uniform bounds on $c_{0}(t)$ in the $C^{2}$ norm.

In the sequel we set for simplicity
\begin{equation}
a_{n}(t):=a(\lambda_{n},t),
\qquad\qquad
b_{n}(t):=b(\ep_{n},\lambda_{n},t),
\nonumber
\end{equation}

\subparagraph{\textmd{\textit{Asymptotic behavior of \holder\ constants}}}

The idea is to apply Lemma~\ref{lemma:holder} with
\begin{equation}
f_{n}(t):=c_{0}(t)-\frac{5}{16}\frac{1}{\lambda_{n}^{2}}\left[\frac{c_{0}'(t)}{c_{0}(t)}\right]^{2}+\frac{1}{4\lambda_{n}^{2}}\frac{c_{0}''(t)}{c_{0}(t)},
\nonumber
\end{equation}
and
\begin{equation}
g_{n}(t):=-\ep_{n}\sin(2a_{n}(t))-\frac{\ep_{n}^{2}}{4}\frac{\sin^{4}(a_{n}(t))}{c_{0}(t)}+\frac{\ep_{n}}{2\lambda_{n}}\frac{c_{0}'(t)}{c_{0}(t)^{3/2}}\sin^{2}(a_{n}(t)).
\nonumber
\end{equation}

To begin with, we observe that a function that is bounded and Lipschitz continuous is also \holder\ continuous. Due to the strict hyperbolicity (\ref{hp:c-sh}), and to the bound (\ref{hp:C3}) on the derivatives of $c_{0}$ up to order three, this implies that the four functions
\begin{equation}
\left[\frac{c_{0}'(t)}{c_{0}(t)}\right]^{2},
\qquad\quad
\frac{c_{0}''(t)}{c_{0}(t)},
\qquad\quad
\frac{1}{c_{0}(t)},
\qquad\quad
\frac{c_{0}'(t)}{c_{0}(t)^{3/2}}
\nonumber
\end{equation}
are both Lipschitz continuous and \holder\ continuous of order $\alpha$, and their Lipschitz and \holder\ constants can be estimated in terms of $\alpha$, $\mu_{1}$, $\mu_{2}$, and the supremum in (\ref{hp:C3}). Recalling that $\lambda_{n}\to 0$ as $n\to +\infty$, this is enough to conclude that the sequence $\{f_{n}\}$ satisfies the equi-Lipschitz assumption (\ref{hp:fn-lip}) of Lemma~\ref{lemma:holder}, and
\begin{equation}
\limsup_{n\to +\infty}\Hold_{\alpha}(f_{n})=\Hold(c_{0}).
\label{th:limsup-fn}
\end{equation}

Now let $g_{1,n}(t)$, $g_{2,n}(t)$, $g_{3,n}(t)$ denote the three terms in the definition of $g_{n}(t)$. In order to estimate $g_{1,n}(t)$, from (\ref{defn:a}) we deduce that
\begin{equation}
|a(\lambda,t_{1})-a(\lambda,t_{2})|\leq\lambda\mu_{2}^{1/2}|t_{1}-t_{2}|
\qquad
\forall(t_{1},t_{2})\in[0,+\infty)^{2}.
\label{est:a-lip}
\end{equation}

Then we observe that
\begin{equation}
|\sin(2y)-\sin(2x)|\leq 2|y-x|^{\alpha}
\qquad
\forall(x,y)\in\re^{2}.
\label{est:sin-holder}
\end{equation}

Indeed, if $|y-x|\leq 1$ this inequality follows from the Lipschitz continuity (with constant~2) of the function $\sin(2x)$, because
\begin{equation}
|\sin(2y)-\sin(2x)|\leq 2|y-x|\leq 2|y-x|^{\alpha},
\nonumber
\end{equation}
while if $|y-x|\geq 1$ the same inequality follows from the boundeness, because
\begin{equation}
|\sin(2y)-\sin(2x)|\leq 2\leq 2|y-x|^{\alpha}.
\nonumber
\end{equation}

From (\ref{est:a-lip}) and (\ref{est:sin-holder}) it follows that
\begin{equation}
|\sin(2a_{n}(t_{1}))-\sin(2a_{n}(t_{2}))|\leq 2|a_{n}(t_{1})-a_{n}(t_{2})|^{\alpha} \leq 2\mu_{2}^{\alpha/2}\lambda_{n}^{\alpha}|t_{1}-t_{2}|^{\alpha},
\nonumber
\end{equation}
which implies that
\begin{equation}
\Hold_{\alpha}(g_{1,n})\leq 2\mu_{2}^{\alpha/2}\ep_{n}\lambda_{n}^{\alpha}
\qquad
\forall n\in\n.
\nonumber
\end{equation}

An analogous argument shows that
\begin{equation}
|\sin^{4}(a_{n}(t_{1}))-\sin^{4}(a_{n}(t_{2}))|\leq H_{1}\mu_{2}^{\alpha/2}\lambda_{n}^{\alpha}|t_{1}-t_{2}|^{\alpha},
\nonumber
\end{equation}
where $H_{1}$ is the \holder\ constant of the function $\sin^{4}x$. Now we recall that the product of two functions that are bounded and \holder\ continuous of order $\alpha$ is again  \holder\ continuous of order $\alpha$, with a constant that depends on the two bounds and on the two \holder\ constants. It follows that
\begin{equation}
\Hold_{\alpha}(g_{2,n})\leq H_{2}\ep_{n}^{2}(\lambda_{n}^{\alpha}+1)
\qquad
\forall n\in\n,
\label{est:g2-hold}
\end{equation}
where $H_{2}$ depends only on $H_{1}$, $\alpha$, $\mu_{1}$, $\mu_{2}$, and the supremum in (\ref{hp:C3}).  In an analogous way, we deduce also that
\begin{equation}
\Hold_{\alpha}(g_{3,n})\leq H_{3}\frac{\ep_{n}}{\lambda_{n}}(\lambda_{n}^{\alpha}+1)
\qquad
\forall n\in\n
\label{est:g3-hold}
\end{equation}
for a suitable constant $H_{3}$.

Due to (\ref{hp:limsup-eln}), the right-hand sides of (\ref{est:g2-hold}) and (\ref{est:g3-hold}) tend to 0 as $n\to +\infty$, and therefore
\begin{equation}
\limsup_{n\to +\infty}\Hold_{\alpha}(g_{n})=\limsup_{n\to +\infty}\Hold_{\alpha}(g_{1,n})\leq 2\mu_{2}^{\alpha/2}\limsup_{n\to +\infty}\left(\ep_{n}\lambda_{n}^{\alpha}\right).
\label{th:limsup-gn}
\end{equation}

At this point, the conclusion follows from (\ref{th:limsup-fn}), (\ref{th:limsup-gn}), and Lemma~\ref{lemma:holder}.

\subparagraph{\textmd{\textit{Exponential growth of $w_{n}(t)$}}}

To begin with, we observe that we have an explicit formula for $w_{n}(t)$, namely
\begin{equation}
w_{n}(t)=\frac{1}{\lambda_{n}c_{0}(0)^{1/2}}w(\ep_{n},\lambda_{n},t)
\qquad
\forall t\geq 0,
\nonumber
\end{equation}
where $w(\ep,\lambda,t)$ is the function defined in (\ref{defn:w}). This implies that
\begin{equation}
w_{n}'(t)=\frac{1}{\lambda_{n}c_{0}(0)^{1/2}}\left\{a_{n}'(t)\cos(a_{n}(t))+b_{n}'(t)\sin(a_{n}(t))\right\}\exp(b_{n}(t)),
\nonumber
\end{equation}
and therefore
\begin{equation}
|w_{n}'(t)|^{2}+\lambda_{n}^{2}|w_{n}(t)|^{2}=R_{n}(t)\exp(2b_{n}(t))
\qquad
\forall t\geq 0,
\label{E-wn=Rn-bn}
\end{equation}
where (for the sake of shortness we do not write explicitly the dependence on $t$ in the terms of the right-hand side)
\begin{equation}
R_{n}(t):=\frac{1}{\lambda_{n}^{2}c_{0}(0)}\left\{(a_{n}')^{2}\cos^{2}a_{n}+(b_{n}')^{2}\sin^{2}a_{n}+\lambda_{n}^{2}\sin^{2}a_{n}+2a_{n}'b_{n}'\cos a_{n}\sin a_{n}\right\}.
\nonumber
\end{equation}

Now we recall that $a_{n}'(t)=\lambda_{n}c_{0}(t)^{1/2}$, and therefore
\begin{eqnarray}
R_{n}(t) & \geq & \frac{c_{0}(t)}{c_{0}(0)}\cos^{2}(a_{n}(t))+\frac{1}{c_{0}(0)}\sin^{2}(a_{n}(t))-\frac{2c_{0}(t)^{1/2}}{c_{0}(0)}\frac{1}{\lambda_{n}}|b_{n}'(t)|
\nonumber
\\[1ex]
& \geq & \frac{\min\left\{1,\mu_{1}\right\}}{\mu_{2}}-\frac{2\mu_{2}^{1/2}}{\mu_{1}}\frac{1}{\lambda_{n}}|b_{n}'(t)|.
\nonumber
\end{eqnarray}

Now we recall that
\begin{equation}
b_{n}'(t)=\lambda_{n}\left\{\frac{\ep_{n}}{2}\frac{\sin^{2}(a_{n}(t))}{c_{0}(t)^{1/2}}-\frac{1}{4\lambda_{n}}\frac{c_{0}'(t)}{c_{0}(t)}-\frac{\delta}{\lambda_{n}^{1-2\sigma}}\right\}.
\nonumber
\end{equation}

Since $\lambda_{n}\to +\infty$ and $\ep_{n}\to 0$ as $n\to +\infty$, and since $1-2\sigma>0$, from the strict hyperbolicity condition (\ref{hp:c-sh}) and the bounds on $c_{0}(t)$ in the $C^{1}$ norm, it follows that $|b_{n}'(t)|/\lambda_{n}\to 0$ uniformly in $[0,+\infty)$, and therefore
\begin{equation}
R_{n}(t)\geq\frac{\min\left\{1,\mu_{1}\right\}}{2\mu_{2}}
\qquad
\forall t\geq 0,
\label{est:Rn}
\end{equation}
provided that $n$ is sufficiently large.

Moreover, from (\ref{th:est-b}) it follows that
\begin{equation}
2b_{n}(t)\geq\frac{\ep_{n}\lambda_{n}}{4\mu_{2}^{1/2}}t-2\delta\lambda_{n}^{2\sigma}t-\log\frac{\mu_{2}}{\mu_{1}}
\qquad
\forall t\geq 0,
\label{est:bn}
\end{equation}
again when $n$ is sufficiently large (because we absorbed the third term in the right-hand side of (\ref{th:est-b}) by changing the coefficient of the logarithm).

Plugging (\ref{est:Rn}) and (\ref{est:bn}) into (\ref{E-wn=Rn-bn}), we obtain (\ref{th:est-wn}).
\end{proof}


\subsection{Universal activators}\label{sec:baire}

In this subsection the Baire category theorem discloses all its power. In Proposition~\ref{prop:activator} we used the basic ingredients in order to cook up a propagation speed that activates a large enough frequency. As far as we know, that propagation speed might activate just that special frequency. Now we produce, or better we let the Baire category theorem produce, a residual set of propagation speeds that activate infinitely many frequencies in the same time. We call them universal activators. The formal definition is the following. 

\begin{defn}[Universal activators]\label{defn:activator}
\begin{em}

Let $\{\lambda_{i}\}$ be a sequence of positive real numbers, and let $\mu_{3}$ and $\mu_{5}$ be two positive real numbers. 

A continuous function $c:[0,+\infty)\to\re$ is called a \emph{universal activator} for the sequence $\{\lambda_{n}\}$ with order $(\mu_{3},\mu_{5})$ if the sequence $\{w_{i}(t)\}$ of solutions to equation
\begin{equation}
w_{i}''(t)+2\delta\lambda_{i}^{2\sigma}w_{i}'(t)+\lambda_{i}^{2}c(t)w_{i}(t)=0
\label{activator-eqn}
\end{equation}
with initial data
\begin{equation}
w_{i}(0)=0,
\qquad
w_{i}'(0)=1
\label{activator-data}
\end{equation}
satisfies the exponential growth condition
\begin{equation}
\limsup_{i\to +\infty}\left(|w_{i}'(t)|^{2}+\lambda_{i}^{2}|w_{i}(t)|^{2}\right)\exp\left(-\mu_{5}\lambda_{i}^{1-\alpha}t+2\delta\lambda_{i}^{2\sigma}t\right)\geq\mu_{3}
\qquad
\forall t\geq 0.
\label{th:activator}
\end{equation}

\end{em}
\end{defn}

The following result is the key tool in the proof of our main results, but it could also provide alternative and shorter proofs of Theorem~\ref{thmbibl:dgcs}.

\begin{thm}[Existence of universal activators]\label{thm:activators}

Let $\mu_{1}$, $\mu_{2}$, $\alpha$, $H$ be real numbers satisfying (\ref{hp:accH}). Let  us define $\mu_{3}$ and $\mu_{4}$ as in (\ref{defn:mu34}), and let us set
\begin{equation}
\mu_{5}:=\frac{H\mu_{4}}{4\mu_{2}^{\alpha/2}}=\frac{H}{16\mu_{2}^{(1+\alpha)/2}}.
\nonumber
\end{equation}

Let $\{\lambda_{i}\}$ be a sequence of positive real numbers such that $\lambda_{i}\to +\infty$ as $i\to +\infty$.

Then the set of all propagation speeds $c\in\PS(\mu_{1},\mu_{2},\alpha,H)$ that are universal activators for the sequence $\{\lambda_{n}\}$ with order $(\mu_{3},\mu_{5})$ is residual.

\end{thm}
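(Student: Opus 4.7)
I apply the Baire category theorem in the complete metric space $(\PS(\mu_1,\mu_2,\alpha,H),\|\cdot\|_\infty)$. For every triple of positive integers $(n,m,T)$ set
\[
V_{n,m,T}:=\bigcup_{i\geq n}\bigl\{c\in\PS:\ \Phi_i(c,t)>\mu_3-\tfrac{1}{m}\ \forall t\in[0,T]\bigr\},
\]
with $\Phi_i(c,t):=(|w_i'(t)|^2+\lambda_i^2|w_i(t)|^2)\exp(-\mu_5\lambda_i^{1-\alpha}t+2\delta\lambda_i^{2\sigma}t)$, where $w_i$ is the solution of (\ref{activator-eqn})--(\ref{activator-data}) associated with the given $c$. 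If $c\in\bigcap_{n,m,T}V_{n,m,T}$, then for every $t\geq 0$ and every $m\in\mathbb{N}_+$ one finds arbitrarily large $i$ with $\Phi_i(c,t)>\mu_3-1/m$, so $\limsup_i\Phi_i(c,t)\geq\mu_3-1/m$ for all $m$, hence $\geq\mu_3$; this is exactly condition (\ref{th:activator}). The theorem therefore reduces to showing each $V_{n,m,T}$ is open and dense in $\PS$.

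\textbf{Openness.} By classical continuous dependence of linear ODE solutions on their coefficients in the $L^\infty$ topology, for every fixed $i$ the map $c\mapsto(w_i,w_i')$ is continuous from $\PS$ to $C^1([0,T])^2$. Consequently, the slice $\{c:\Phi_i(c,t)>\mu_3-1/m\ \forall t\in[0,T]\}$ is open, and $V_{n,m,T}$ is a union of open sets.

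\textbf{Density.} Fix $c\in\PS$, $\eta>0$, and the triple $(n,m,T)$. I first regularize $c$: pick $m_0\in(\mu_1,\mu_2)$, a small $\theta\in(0,1)$ and a standard mollifier $\rho_\varepsilon$, and set
\[
c_0:=(1-\theta)(c*\rho_\varepsilon)+\theta\,m_0.
\]
Then $c_0\in C^\infty$ satisfies (\ref{hp:C3}), $\mu_1\leq c_0\leq\mu_2$, $\Hold_\alpha(c_0)\leq(1-\theta)\Hold_\alpha(c)\leq(1-\theta)H<H$ \emph{strictly}, and $\|c_0-c\|_\infty<\eta/2$ for $\theta,\varepsilon$ small. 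Next I extract a subsequence $\{i_k\}$ with $i_k\geq n$ and $\lambda_{i_k}\to+\infty$, and apply Proposition~\ref{prop:activator} with $\Lambda_k:=\lambda_{i_k}$ and $E_k:=(H/(4\mu_2^{\alpha/2}))\Lambda_k^{-\alpha}$, so that $E_k\Lambda_k^\alpha$ is the constant $H/(4\mu_2^{\alpha/2})$ and $\mu_4 E_k\Lambda_k=\mu_5\Lambda_k^{1-\alpha}$ identically. The three conclusions of that proposition yield, respectively, $\gamma(E_k,\Lambda_k,\cdot)\to c_0$ uniformly by (\ref{th:cn-to-c0}); $\limsup_k\Hold_\alpha(\gamma(E_k,\Lambda_k,\cdot))\leq\max\{(1-\theta)H,H/2\}<H$ by (\ref{th:cn-holder}); and $\Phi_{i_k}(\gamma(E_k,\Lambda_k,\cdot),t)\geq\mu_3$ for every $t\geq 0$ by (\ref{th:est-wn}). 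Choosing $k$ large enough that all three estimates become effective, the candidate $\tilde c:=\gamma(E_k,\Lambda_k,\cdot)$ belongs to $\PS(\mu_1,\mu_2,\alpha,H)$, satisfies $\|\tilde c-c\|_\infty<\eta$, and lies in $V_{n,m,T}$ via the index $i=i_k$. The conclusion then follows from Baire's theorem.

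\textbf{Main obstacle.} The delicate point is the Hölder budget: (\ref{th:cn-holder}) controls $\Hold_\alpha(c_n)$ only asymptotically by $\max\{\Hold_\alpha(c_0),H/2\}$. A mere mollification of $c$ would give $\Hold_\alpha(c_0)\leq H$ with no strict margin, and then for no specific $k$ can one guarantee that $\gamma(E_k,\Lambda_k,\cdot)$ actually belongs to $\PS(\mu_1,\mu_2,\alpha,H)$. The convex-combination trick with the constant $m_0$ is what creates the strict inequality $(1-\theta)H<H$ and makes the whole density argument legitimate. Beyond that, everything is the standard Baire-category assembly combined with continuous dependence of the ODEs on $c$.
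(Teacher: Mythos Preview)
Your proof is correct and follows essentially the same route as the paper: both arguments rest on Baire category, use continuous dependence of $(w_i,w_i')$ on $c$ for the topological step, and invoke Proposition~\ref{prop:activator} with the identical choice $\ep_n=H/(4\mu_2^{\alpha/2})\lambda_n^{-\alpha}$ after regularising the centre so as to create a strict margin $\Hold_\alpha(c_0)<H$. The differences are purely organisational---you work with open dense sets $V_{n,m,T}$ while the paper works with their complements $\CC_k$, and you spell out the regularisation as a convex combination with a constant whereas the paper just says ``up to a small modification''; note that your convex combination also yields the strict bounds $\mu_1<c_0<\mu_2$ (needed so that $\tilde c\in\PS$), which you use implicitly but do not state.
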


\begin{proof}

Let us consider the set $\CC$ of ``non universal activators'', namely the set of all propagation speeds $c\in\PS(\mu_{1},\mu_{2},\alpha,H)$ for which (\ref{th:activator}) is false, and therefore
\begin{equation}
\exists t\geq 0
\qquad
\limsup_{i\to +\infty}\left(|w_{i}'(t)|^{2}+\lambda_{i}^{2}|w_{i}(t)|^{2}\right)\exp\left(-\mu_{5}\lambda_{i}^{1-\alpha}t+2\delta\lambda_{i}^{2\sigma}t\right)<\mu_{3},
\nonumber
\end{equation}
or equivalently
\begin{multline}
\exists t\geq 0
\quad
\exists\eta>0
\quad
\exists j\in\n
\quad
\forall i\geq j
\\[0.5ex]
|w_{i}'(t)|^{2}+\lambda_{i}^{2}|w_{i}(t)|^{2}\leq(\mu_{3}-\eta)\exp\left(\mu_{5}\lambda_{i}^{1-\alpha}t-2\delta\lambda_{i}^{2\sigma}t\right).
\label{defn:non-activ}
\end{multline}

\paragraph{\textmd{\textit{Quantitative non-activators}}}

In order to make the previous statement more quantitative, we introduce the set $\CC_{k}$ of all admissible propagation speeds such that
\begin{equation}
\exists t\in\left[0,k\right]
\quad
\forall i\geq k
\qquad
|w_{i}'(t)|^{2}+\lambda_{i}^{2}|w_{i}(t)|^{2}\leq\left(\mu_{3}-\frac{1}{k}\right)\exp\left(\mu_{5}\lambda_{i}^{1-\alpha}t-2\delta\lambda_{i}^{2\sigma}t\right).
\nonumber
\end{equation}

In words, now $t$ is confined in the compact interval $[0,k]$, and we have chosen $\eta=1/k$ and $j=k$ in (\ref{defn:non-activ}). It turns out that the set $\CC$ is the union of all $\CC_{k}$'s. Indeed, if some propagation speed satisfies (\ref{defn:non-activ}), then the same propagation speed belongs to $\CC_{k}$ provided that $k$ satisfies
\begin{equation}
k\geq t,
\qquad\qquad
\frac{1}{k}\leq\eta,
\qquad\qquad
k\geq j.
\nonumber
\end{equation}

The proof is complete if we show that $\CC_{k}$ is a closed set with empty interior for every positive integer $k$.

\paragraph{\textmd{\textit{The set $\CC_{k}$ is closed}}}

Let $k$ be a \emph{fixed} positive integer. Let $\{c_{n}\}\subseteq\CC_{k}$ be any sequence, and let us assume that $c_{n}(t)\to c_{\infty}(t)$ uniformly in $[0,+\infty)$. We claim that $c_{\infty}\in\CC_{k}$.

For every positive integer $i$, let $w_{i,n}(t)$ denote the solution to (\ref{activator-eqn})--(\ref{activator-data}) with $c:=c_{n}$, and let $w_{i,\infty}$ denote the solution with  with $c:=c_{\infty}$. From the definition of $\CC_{k}$ we know that, for every $n\in\n$, there exists $t_{n}\in[0,k]$ such that
\begin{equation}
|w_{i,n}'(t_{n})|^{2}+\lambda_{i}^{2}|w_{i,n}(t_{n})|^{2}\leq\left(\mu_{3}-\frac{1}{k}\right)\exp\left(\mu_{5}\lambda_{i}^{1-\alpha}t_{n}-2\delta\lambda_{i}^{2\sigma}t_{n}\right)
\qquad
\forall i\geq k.
\label{est:win-tin}
\end{equation}

Up to subsequences (not relabeled), we can always assume that $t_{n}\to t_{\infty}\in[0,k]$. Moreover, we know that solutions to (\ref{activator-eqn})--(\ref{activator-data}) depend is a continuous way on the propagation speed, in the sense that
\begin{equation}
c_{n}\to c_{\infty}
\quad
\mbox{uniformly in }[0,T]
\quad\Longrightarrow\quad
w_{i,n}\to w_{i,\infty}
\quad
\mbox{in }C^{1}([0,T])
\nonumber
\end{equation}
for every $T>0$. This implies in particular that
\begin{equation}
|w_{i,n}'(t)|^{2}+\lambda_{i}^{2}|w_{i,n}(t)|^{2}\to |w_{i,\infty}'(t)|^{2}+\lambda_{i}^{2}|w_{i,\infty}(t)|^{2}
\qquad
\mbox{uniformly in }[0,k],
\nonumber
\end{equation}
which in turn implies that we can pass to the limit in (\ref{est:win-tin}) and deduce that
\begin{equation}
|w_{i,\infty}'(t_{\infty})|^{2}+\lambda_{i}^{2}|w_{i,\infty}(t_{\infty})|^{2}\leq\left(\mu_{3}-\frac{1}{k}\right)\exp\left(\mu_{5}\lambda_{i}^{1-\alpha}t_{\infty}-2\delta\lambda_{i}^{2\sigma}t_{\infty}\right)
\qquad
\forall i\geq k,
\nonumber
\end{equation}
which proves that $c_{\infty}\in\CC_{k}$.

\paragraph{\textmd{\textit{The set $\CC_{k}$ has empty interior}}}

Let us assume by contradiction that there exist an integer $k_{0}\geq 1$, an admissible propagation speed $c_{0}$, and a real number $\ep_{0}>0$ such that $B_{\PS}(c_{0},\ep_{0})\subseteq\CC_{k_{0}}$, where $B_{\PS}(c_{0},\ep_{0})$ denotes the open ball in $\PS(\mu_{1},\mu_{2},\alpha,H)$ with center in $c_{0}$ and radius $\ep_{0}$. 

\subparagraph{\textmd{\textit{Regularization of the center}}}

Up to a small modification of $c_{0}$, and a small reduction of the radius $\ep_{0}$, we can assume that $c_{0}$ has the following further properties.
\begin{itemize}

\item  It is of class $C^{3}$ and satisfies (\ref{hp:C3}).

\item  It does not saturate the inequalities in (\ref{hp:c-sh}) and (\ref{hp:c-holder}), namely there exists $\ep_{1}>0$ such that
\begin{equation}
\mu_{1}+\ep_{1}\leq c_{0}(t)\leq \mu_{2}-\ep_{1}
\qquad
\forall t\geq 0,
\label{sat:cn}
\end{equation}
and
\begin{equation}
|c_{0}(t)-c_{0}(s)|\leq(1-\ep_{1})H|t-s|^{\alpha}
\qquad
\forall(t,s)\in[0,+\infty)^{2}.
\label{sat:cn-holder}
\end{equation}

\end{itemize}

\subparagraph{\textmd{\textit{Use of rescaled basic ingredient}}}

For every positive integer $n$, let us set
\begin{equation}
\ep_{n}:=\frac{H}{4\mu_{2}^{\alpha/2}}\cdot\frac{1}{\lambda_{n}^{\alpha}},
\label{defn:epn}
\end{equation}
and let us consider the corresponding sequence of propagation speeds $c_{n}(t)$ defined as in (\ref{defn:cn}) starting from the modified version of $c_{0}$. We claim that, for $n$ sufficiently large, $c_{n}\in B_{\PS}(c_{0},\ep_{0})$ but $c_{n}\not\in\CC_{k_{0}}$. This would give a contradiction.

\subparagraph{\textmd{\textit{Final contradiction: $c_{n}\in B_{\PS}(c_{0},\ep_{0})$ for $n$ large enough}}}

Since $\lambda_{n}\to +\infty$ and $\ep_{n}\to 0$, from (\ref{th:cn-to-c0}) it follows that $c_{n}\to c_{0}$ uniformly in $[0,+\infty)$. Due to (\ref{sat:cn}), this implies that $c_{n}(t)$ satisfies the strict hyperbolicity condition (\ref{hp:c-sh}) when $n$ is large enough. Finally, from (\ref{th:cn-holder}), (\ref{sat:cn-holder}) and (\ref{defn:epn}) it follows that
\begin{equation}
\limsup_{n\to +\infty}\Hold_{\alpha}(c_{n})\leq\max\left\{\Hold_{\alpha}(c_{0}),\frac{H}{2}\right\}\leq\max\left\{(1-\ep_{1})H,\frac{H}{2}\right\}.
\nonumber
\end{equation}

Since the latter is strictly less than $H$, the propagation speed $c_{n}$ satisfies (\ref{hp:c-holder}) for $n$ large enough.

In conclusion, we have proved that $c_{n}\in\PS(\mu_{1},\mu_{2},\alpha,H)$ when $n$ is sufficiently large, and $c_{n}\to c_{0}$ uniformly in $[0,+\infty)$, and this implies that $c_{n}\in B_{\PS}(c_{0},\ep_{0})$ when $n$ is large enough.

\subparagraph{\textmd{\textit{Final contradiction: $c_{n}\not\in C_{k_{0}}$ for $n$ large enough}}}

Let us consider the solution $w_{n,n}$ to (\ref{activator-eqn})--(\ref{activator-data}) with $i:=n$ and $c:=c_{n}$. From (\ref{th:est-wn}) we know that, for every $n$ large enough, it turns out that
$$|w_{n,n}'(t)|^{2}+\lambda_{n}^{2}|w_{n,n}(t)|^{2}\geq\mu_{3}\exp\left(\mu_{4}\ep_{n}\lambda_{n}t-2\delta\lambda_{n}^{2\sigma}t\right)=\mu_{3}\exp\left(\mu_{5}\lambda_{n}^{1-\alpha}t-2\delta\lambda_{n}^{2\sigma}t\right)$$
for every $t\geq 0$, which contradicts the definition of $C_{k_{0}}$ as soon as $n\geq k_{0}$.
\end{proof}


\subsection{Proof of Theorem~\ref{thm:gevrey-critical} and Theorem~\ref{thm:damping-critical}}

Let $\{\lambda_{i}\}$ be the sequence of (the square roots of) the eigenvalues of $A$, and let $\{e_{i}\}$ be the corresponding orthonormal basis of $\HH$. Up to extracting a subsequence, we can also assume that
\begin{equation}
\sum_{i=0}^{\infty}\frac{1}{\lambda_{i}^{2}}<+\infty.
\label{hp:li-series}
\end{equation}

Let $c\in\PS(\mu_{1},\mu_{2},\alpha,H)$ be a universal activator for the sequence $\{\lambda_{i}\}$ with some order $(\mu_{3},\mu_{5})$ according to Definition~\ref{defn:activator}. Given a sequence $\{a_{i}\}$ of real numbers, we consider the solution to (\ref{eqn:main}) with initial data
\begin{equation}
u(0)=0,
\qquad
u'(0)=\sum_{i=0}^{\infty}a_{i}e_{i}.
\nonumber
\end{equation}

The solution is
\begin{equation}
u(t)=\sum_{i=0}^{\infty}a_{i}w_{i}(t)e_{i},
\nonumber
\end{equation}
where $\left\{w_{i}(t)\right\}$ is the sequence of solutions to (\ref{activator-eqn})--(\ref{activator-data}). We observe that
\begin{equation}
(u(0),u'(0))\in\GG_{s,r_{0},1/2}(A)\times\GG_{s,r_{0},0}(A)
\quad\Longleftrightarrow\quad
\sum_{i=0}^{\infty}a_{i}^{2}\exp\left(2r_{0}\lambda_{i}^{1/s}\right)<+\infty,
\label{equiv:data-gsr}
\end{equation}
and similarly
\begin{equation}
(u(0),u'(0))\in\GG_{s,\log,1/2}(A)\times\GG_{s,\log,0}(A)
\quad\Longleftrightarrow\quad
\sum_{i=0}^{\infty}a_{i}^{2}\exp\left(\frac{2\lambda_{i}^{1/s}}{\log(2+\lambda_{i})}\right)<+\infty,
\label{equiv:data-dsr}
\end{equation}
while $(u(t),u'(t))\not\in\GG_{-S,\log,1/2}(A)\times\GG_{-S,\log,0}(A)$ if and only if
\begin{equation}
\sum_{i=0}^{\infty}a_{i}^{2}\left(|w_{i}'(t)|^{2}+\lambda_{i}^{2}|w_{i}(t)|^{2}\right)\exp\left(-\frac{2\lambda_{i}^{1/S}}{\log(2+\lambda_{i})}\right)=+\infty.
\label{equiv:t>0}
\end{equation}

In the case of Theorem~\ref{thm:gevrey-critical} we choose
\begin{equation}
a_{i}:=\frac{1}{\lambda_{i}}\exp\left(-r_{0}\lambda_{i}^{1/s}\right)
\qquad
\forall i\in I.
\nonumber
\end{equation}

In this case the convergence of the series in (\ref{equiv:data-gsr}) reduces to (\ref{hp:li-series}). As for (\ref{equiv:t>0}), we observe that the general term of the series can be rewritten as $\Phi_{i}(t)\Psi_{i}(t)$, where
\begin{equation}
\Phi_{i}(t):=\left(|w_{i}'(t)|^{2}+\lambda_{i}^{2}|w_{i}(t)|^{2}\right)\exp\left(-\mu_{5}\lambda_{i}^{1-\alpha}t+2\delta\lambda_{i}^{2\sigma}t\right),
\label{defn:Phi-i}
\end{equation}
and
\begin{equation}
\Psi_{i}(t):=\frac{1}{\lambda_{i}^{2}}\exp\left(-2r_{0}\lambda_{i}^{1/s}+\mu_{5}\lambda_{i}^{1-\alpha}t-2\delta\lambda_{i}^{2\sigma}t-\frac{2\lambda_{i}^{1/S}}{\log(2+\lambda_{i})}\right).
\nonumber
\end{equation}

From the definition of universal activator it turns out that
\begin{equation}
\limsup_{i\to+\infty}\Phi_{i}(t)\geq\mu_{3}>0
\qquad
\forall t>0.
\label{limsup-Phi-i}
\end{equation}

Since $1-\alpha=1/s=1/S>2\sigma$, it turns out that
\begin{equation}
\lim_{i\to+\infty}\Psi_{i}(t)=+\infty
\label{lim-Psi-i}
\end{equation}
for every $t>2r_{0}/\mu_{5}=:t_{0}$. This proves that the series in (\ref{equiv:t>0}) diverges for every $t>t_{0}$.

In the case of Theorem~\ref{thm:damping-critical} we choose
\begin{equation}
a_{i}:=\frac{1}{\lambda_{i}}\exp\left(-\frac{\lambda_{i}^{1/s}}{\log(2+\lambda_{i})}\right).
\nonumber
\end{equation}

As before the convergence of the series in (\ref{equiv:data-dsr}) reduces to (\ref{hp:li-series}), while the general term of the series in (\ref{equiv:t>0}) can be written as $\Phi_{i}(t)\Psi_{i}(t)$, with $\Phi_{i}(t)$ defined by (\ref{defn:Phi-i}), and 
\begin{equation}
\Psi_{i}(t):=\frac{1}{\lambda_{i}^{2}}\exp\left(-\frac{2\lambda_{i}^{1/s}}{\log(2+\lambda_{i})}+\mu_{5}\lambda_{i}^{1-\alpha}t-2\delta\lambda_{i}^{2\sigma}t-\frac{2\lambda_{i}^{1/S}}{\log(2+\lambda_{i})}\right).
\nonumber
\end{equation}

Again the definition of universal activator implies (\ref{limsup-Phi-i}), while the fact that $1-\alpha=1/s=1/S=2\sigma$, and $2\delta<\mu_{5}$, implies (\ref{lim-Psi-i}) for every $t>0$. This proves that the series in (\ref{equiv:t>0}) diverges for every $t>0$ if $2\delta<\mu_{5}$.


\subsection{Proof of Remark~\ref{rmk:data-residual}}\label{sec:rmk}

For the sake of shortness, let $\XX$ denote the complete metric space $\GG_{s,\log,1/2}(A)\times\GG_{s,\log,0}(A)$. Let $\CC$ denote the set of elements of $\XX$ that are initial data of solutions to (\ref{eqn:main}) that do not satisfy (\ref{th:dgcs-gevrey-bis}), so that
\begin{equation}
\exists t>0
\qquad
\|u'(t)\|_{\GG_{-S,\log,0}(A)}^{2}+\|u(t)\|_{\GG_{-S,\log,1/2}(A)}^{2}<+\infty.
\nonumber
\end{equation}

We make this statement more quantitative by introducing the set $\CC_{k}$ of all initial data in $\XX$ that originate solutions to (\ref{eqn:main}) satisfying
\begin{equation}
\exists t\in[1/k,k]
\qquad
\|u'(t)\|_{\GG_{-S,\log,0}(A)}^{2}+\|u(t)\|_{\GG_{-S,\log,1/2}(A)}^{2}\leq k.
\nonumber
\end{equation}

It is possible to show that $\CC$ is the union of all $C_{k}$'s, and that $\CC_{k}$ is a closed subset of $\XX$ for every positive integer $k$ (this requires only that Fourier components of solutions depend continuously on initial data, and that the norm in the spaces $\GG_{-S,\log,\beta}(A)$ is lower semicontinuous with respect to component-wise convergence).

It remains to show that $\CC_{k}$ has empty interior for every positive integer $k$. Let us assume by contradiction that some $\CC_{k_{0}}$ contains the open ball in $\XX$ with center in some $(v_{0},v_{1})\in\XX$ and radius $\ep_{0}>0$. Up to a small reduction of the radius, we can assume that the center $(v_{0},v_{1})$ has only a finite number of Fourier components different from zero, and therefore the corresponding solution $v(t)$ satisfies
\begin{equation}
\|v'(t)\|_{\GG_{-S,\log,0}(A)}^{2}+\|v(t)\|_{\GG_{-S,\log,1/2}(A)}^{2}\leq M_{0}
\qquad
\forall t\in[0,k_{0}]
\nonumber
\end{equation}
for a suitable constant $M_{0}$.

By assumption, we know that equation (\ref{eqn:main}) has a solution $u(t)$, with suitable initial data $(u_{0},u_{1})\in\XX$, that satisfies (\ref{th:dgcs-gevrey-bis}). Due to the linearity of the equation, the solution with initial data $(v_{0}+\ep u_{0},v_{1}+\ep u_{1})$ is $v(t)+\ep u(t)$, and therefore $(v_{0}+\ep u_{0},v_{1}+\ep u_{1})\not\in\CC_{k_{0}}$ for every $\ep\neq 0$.

On the other hand, $(v_{0}+\ep u_{0},v_{1}+\ep u_{1})$ belongs to the ball if $|\ep|$ is small enough, and this provides the required contradiction.


\appendix

\setcounter{equation}{0}
\section{Heuristics for the basic ingredient}\label{appendix}

Let $c_{0}\in\PS(\mu_{1},\mu_{2},\alpha,H)$ be a \emph{given smooth} function. Suppose we want to find a function $c_{\lambda}$, which is close enough to $c_{0}$ in the uniform norm, and has \holder\ constant close to the \holder\ constant of $c_{0}$, such that equation
\begin{equation}
w''(t)+2\delta\lambda^{2\sigma}w'(t)+\lambda^{2}c_{\lambda}(t)w(t)=0
\label{heu:eqn}
\end{equation}
has a solution that grows exponentially with time. To this end, it seems reasonable to look for a solution of the form
\begin{equation}
w(t)=\sin(a(t))\exp(b(t)),
\label{heu:sol}
\end{equation}
for suitable functions $a(t)$ and $b(t)$. Plugging (\ref{heu:sol}) into (\ref{heu:eqn}), with some computations we find that (for the sake of shortness, we do not write explicitly the dependence on $t$)
\begin{equation}
\left\{-(a')^{2}+b''+(b')^{2}+2\delta\lambda^{2\sigma}b'+\lambda^{2}c_{\lambda}\right\}\sin a+\left\{a''+2a'b'+2\delta\lambda^{2\sigma}a'\right\}\cos a=0.
\label{heu:eqn-big}
\end{equation}

In order to simplify the coefficient of $\cos a$, it seems reasonable to consider functions $b(t)$ of the form
\begin{equation}
b(t)=\beta(t)-\delta\lambda^{2\sigma}t,
\nonumber
\end{equation}
so that (\ref{heu:eqn-big}) reduces to
\begin{equation}
\left\{-(a')^{2}+\beta''+(\beta')^{2}-\delta^{2}\lambda^{4\sigma}+\lambda^{2}c_{\lambda}\right\}\sin a+\left\{a''+2a'\beta'\right\}\cos a=0.
\label{heu:eqn-med}
\end{equation}

At this point it would be useful to factor out a $\sin a$ from the coefficient of $\cos a$. Thus we make the ansatz that
\begin{equation}
a''+2a'\beta'=\ep\lambda^{2}\sin^{2}a,
\label{ansatz-b}
\end{equation}
where the square gives us some hope that $\beta'$ could be positive, which means $\beta$ increasing.

Now we recall that $c_{\lambda}(t)$ should hopefully be close to $c_{0}(t)$, and this leads us to a second ansatz that
\begin{equation}
a'(t)=\lambda c_{0}(t)^{1/2}.
\nonumber
\end{equation}

If this is the case, then $a(t)$ is given by (\ref{defn:a}). At this point from (\ref{ansatz-b}) we obtain that
\begin{equation}
\beta'=\frac{\ep\lambda^{2}\sin^{2}a-a''}{2a'},
\nonumber
\end{equation}
from which we compute $\beta(t)$ and therefore also $b(t)$. In this way we obtain (\ref{defn:b}). Finally, from (\ref{heu:eqn-med}) we can compute $c_{\lambda}(t)$, and we obtain exactly (\ref{defn:gamma}). 


\subsubsection*{\centering Acknowledgments}

Both authors are members of the \selectlanguage{italian} ``Gruppo Nazionale per l'Analisi Matematica, la Probabilit\`{a} e le loro Applicazioni'' (GNAMPA) of the ``Istituto Nazionale di Alta Matematica'' (INdAM). 

\selectlanguage{english}



\begin{thebibliography}{10}
\providecommand{\url}[1]{\texttt{#1}}
\providecommand{\urlprefix}{URL }
\providecommand{\selectlanguage}[1]{\relax}
\providecommand{\eprint}[2][]{\url{#2}}

\bibitem{2006-JDE-CicCol}
\textsc{M.~Cicognani}, \textsc{F.~Colombini}.
\newblock Modulus of continuity of the coefficients and loss of derivatives in
  the strictly hyperbolic {C}auchy problem.
\newblock \emph{J. Differential Equations} \textbf{221} (2006), no.~1,
  143--157.

\bibitem{2007-JMAA-CicCol}
\textsc{M.~Cicognani}, \textsc{F.~Colombini}.
\newblock Modulus of continuity of the coefficients and (non)quasianalytic
  solutions in the strictly hyperbolic {C}auchy problem.
\newblock \emph{J. Math. Anal. Appl.} \textbf{333} (2007), no.~2, 1237--1253.

\bibitem{2008-MScand-CicHir}
\textsc{M.~Cicognani}, \textsc{F.~Hirosawa}.
\newblock On the {G}evrey well-posedness for second order strictly hyperbolic
  {C}auchy problems under the influence of the regularity of the coefficients.
\newblock \emph{Math. Scand.} \textbf{102} (2008), no.~2, 283--304.

\bibitem{dgcs}
\textsc{F.~Colombini}, \textsc{E.~De~Giorgi}, \textsc{S.~Spagnolo}.
\newblock Sur les \'{e}quations hyperboliques avec des coefficients qui ne
  d\'{e}pendent que du temps.
\newblock \emph{Ann. Scuola Norm. Sup. Pisa Cl. Sci. (4)} \textbf{6} (1979),
  no.~3, 511--559.

\bibitem{2002-SNS-ColDSaKin}
\textsc{F.~Colombini}, \textsc{D.~Del~Santo}, \textsc{T.~Kinoshita}.
\newblock Well-posedness of the {C}auchy problem for a hyperbolic equation with
  non-{L}ipschitz coefficients.
\newblock \emph{Ann. Sc. Norm. Super. Pisa Cl. Sci. (5)} \textbf{1} (2002),
  no.~2, 327--358.

\bibitem{2003-BSM-ColDSaRei}
\textsc{F.~Colombini}, \textsc{D.~Del~Santo}, \textsc{M.~Reissig}.
\newblock On the optimal regularity of coefficients in hyperbolic {C}auchy
  problems.
\newblock \emph{Bull. Sci. Math.} \textbf{127} (2003), no.~4, 328--347.

\bibitem{cjs}
\textsc{F.~Colombini}, \textsc{E.~Jannelli}, \textsc{S.~Spagnolo}.
\newblock Well-posedness in the {G}evrey classes of the {C}auchy problem for a
  nonstrictly hyperbolic equation with coefficients depending on time.
\newblock \emph{Ann. Scuola Norm. Sup. Pisa Cl. Sci. (4)} \textbf{10} (1983),
  no.~2, 291--312.

\bibitem{2013-AdNS-DAbLuc}
\textsc{M.~D'Abbicco}, \textsc{S.~Lucente}.
\newblock A modified test function method for damped wave equations.
\newblock \emph{Adv. Nonlinear Stud.} \textbf{13} (2013), no.~4, 867--892.

\bibitem{2011-JDE-DAbRei}
\textsc{M.~D'Abbicco}, \textsc{M.~Reissig}.
\newblock Long time asymptotics for 2 by 2 hyperbolic systems.
\newblock \emph{J. Differential Equations} \textbf{250} (2011), no.~2,
  752--781.

\bibitem{2015-JMAA-EbeFitHir}
\textsc{M.~R. Ebert}, \textsc{L.~Fitriana}, \textsc{F.~Hirosawa}.
\newblock On the energy estimates of the wave equation with time dependent
  propagation speed asymptotically monotone functions.
\newblock \emph{J. Math. Anal. Appl.} \textbf{432} (2015), no.~2, 654--677.

\bibitem{gg:residual}
\textsc{M.~Ghisi}, \textsc{M.~Gobbino}.
\newblock Residual pathologies.
\newblock ArXiv:1908.09496.

\bibitem{gg:dgcs-strong}
\textsc{M.~Ghisi}, \textsc{M.~Gobbino}.
\newblock Linear wave equations with time-dependent propagation speed and
  strong damping.
\newblock \emph{J. Differential Equations} \textbf{260} (2016), no.~2,
  1585--1621.

\bibitem{gg:cjs-strong}
\textsc{M.~Ghisi}, \textsc{M.~Gobbino}.
\newblock Time-dependent propagation speed vs strong damping for degenerate
  linear hyperbolic equations.
\newblock \emph{J. Differential Equations} \textbf{266} (2019), no.~1,
  114--146.

\bibitem{ggh:tams}
\textsc{M.~Ghisi}, \textsc{M.~Gobbino}, \textsc{A.~Haraux}.
\newblock Local and global smoothing effects for some linear hyperbolic
  equations with a strong dissipation.
\newblock \emph{Trans. Amer. Math. Soc.} \textbf{368} (2016), no.~3,
  2039--2079.

\bibitem{2004-SNS-HirRei}
\textsc{F.~Hirosawa}, \textsc{M.~Reissig}.
\newblock About the optimality of oscillations in non-{L}ipschitz coefficients
  for strictly hyperbolic equations.
\newblock \emph{Ann. Sc. Norm. Super. Pisa Cl. Sci. (5)} \textbf{3} (2004),
  no.~3, 589--608.

\bibitem{2009-JMAA-HirWir}
\textsc{F.~Hirosawa}, \textsc{J.~Wirth}.
\newblock Generalised energy conservation law for wave equations with variable
  propagation speed.
\newblock \emph{J. Math. Anal. Appl.} \textbf{358} (2009), no.~1, 56--74.

\end{thebibliography}

\label{NumeroPagine}

\end{document}